\renewcommand{\algorithmcfname}{ALGORITHM}
\newtheorem{assumption}{Assumption}
\newcommand{\MNIST}{{\small MNIST}}
\newcommand{\EMNIST}{{\small EMNIST}}
\newtheorem{theorem}{Theorem}
\newtheorem{lemma}{Lemma}
\begin{document}

\title[Trust-Region Algorithms for Training Responses: 
Machine Learning Methods Using Indefinite Hessian Approximations]
{Trust-Region Algorithms for Training Responses: 
Machine Learning Methods Using Indefinite Hessian Approximations}


\author[J. Erway]{Jennifer B. Erway}
\email{erwayjb@wfu.edu}
\address{Department of Mathematics, Wake Forest University, Winston-Salem, NC 27109}

\author[J. Griffin]{Joshua Griffin}
\email{Joshua.Griffin@sas.com}
\address{SAS, Cary, NC 27513}

\author[R. Marcia]{Roummel F. Marcia}
\email{rmarcia@ucmerced.edu}
\address{Department of Applied Mathematics, University of California, Merced, Merced, CA 95343}

\author[R. Ohmeni]{Riadh Omheni}
\email{Riadh.Omheni@sas.com}
\address{SAS, Cary, NC 27513}

\thanks{J.~B. Erway is supported in part by National Science Foundation grants
CMMI-1334042 and IIS-1741264}
\thanks{R.~F. Marcia is supported in part by National Science Foundation grants
CMMI-1333326 and IIS-1741490}

\date{\today}

\keywords{Large-scale optimization, non-convex, machine learning,
  trust-region methods, quasi-Newton methods, 
  limited-memory symmetric rank-one update}

\begin{abstract}
Machine learning (ML) problems are often posed as highly nonlinear and
nonconvex unconstrained optimization problems.  Methods for solving ML
problems based on stochastic gradient descent are easily scaled for very
large problems but may involve fine-tuning many hyper-parameters.
Quasi-Newton approaches based on the limited-memory
Broyden-Fletcher-Goldfarb-Shanno (BFGS) update typically do not require
manually tuning hyper-parameters but suffer from approximating a
potentially indefinite Hessian with a positive-definite matrix.
Hessian-free methods leverage the ability to perform Hessian-vector
multiplication without needing the entire Hessian matrix, but each
iteration's complexity is significantly greater than quasi-Newton methods.
In this paper we propose an alternative approach for solving ML problems
based on a quasi-Newton trust-region framework for solving large-scale
optimization problems that allow for indefinite Hessian approximations.
Numerical experiments on a standard testing data set show that with a fixed
computational time budget, the proposed methods achieve better results than
the traditional limited-memory BFGS and the Hessian-free methods.
\end{abstract}

\maketitle

\newcommand{\mgap}{\;\;}
\newcommand{\bgap}{\;\;\;}
\newcommand{\qDef}{{\mathcal Q}}
\newcommand{\defined}{\mathop{\,{\scriptstyle\stackrel{\triangle}{=}}}\,}
\newcommand{\diag}{\text{diag}}
\renewcommand{\algorithmcfname}{ALGORITHM}

\makeatletter
\newcommand{\minimize}[1]{{\displaystyle\minim_{#1}}}
\newcommand{\minim}{\mathop{\operator@font{minimize}}}
\newcommand{\subject}{\mathop{\operator@font{subject\ to}}}  
\newcommand{\words}[1]{\mgap\text{#1}\mgap}
\def\BFGS{{\small BFGS}}
\def\LBFGS{{\small L-BFGS}}
\def\LSR{{\small L-SR1}}
\def\SR{{\small SR1}}
\def\CG{{\small CG}}
\def\DFP{{\small DFP}}
\def\OBS{{\small OBS}}
\def\OBSSC{{\small OBS-SC}}
\def\SCSR1{{\small SC-SR1}}
\def\PSB{{\small PSB}}
\def\QR{{\small QR}}
\def\MATLAB{{\small MATLAB}}
\def\SGD{{\small SGD}}
\newcommand{\MCG}{{\small MCG}}
\newcommand{\SSM}{{\small SSM}}
\newcommand{\LSTRS}{{\small LSTRS}}
\newcommand{\phasedSSM}{phased-\SSM}
\newcommand{\transpose}{T}
\providecommand{\GLTR   }{{\small GLTR}}
\def\LSR1{{\small L-SR1}}
\def\LSRTR{{\small L-SR1-TR}}
\def\SR1{{\small SR1}}
\def\LSRTR{{\small L-SR1-TR}}
\def\LSSRTR{{\small L-SSR1-TR}}
\newcommand{\Iset}{\mathcal{I}}

\renewcommand{\algorithmcfname}{ALGORITHM}
\renewcommand{\vec}[1]{#1}

\makeatother

\pagestyle{myheadings}
\thispagestyle{plain}
\markboth{J. B. ERWAY, J. GRIFFIN, R. F. MARCIA, and R. OMHENI}{Trust-Region Algorithms for 
Machine Learning  Using Indefinite Hessian Approximations}

\section{Introduction}
\label{sec:intro}
Machine learning problems, such as text classification and speech recognition,
are often nonlinear and
nonconvex unconstrained problems of the form 
\begin{equation}\label{eqn-min}
\min_{w\in\Re^m} f(w) \defined \frac{1}{n}\sum_{i=1}^n f_i(w),
\end{equation}
where $f_i$ is a function of the $i$th observation in a \emph{training}
data set $\{(x_i,y_i)\}$
with $x_i \in \Re^{d}$ and $y_i\in\Re^o$.
In the literature (see e.g., \cite{Vap92,Nocedal2016}), (\ref{eqn-min}) is often
referred to as the \emph{empirical risk}.  Generally speaking, these
problems have several features that make traditional optimization
algorithms ineffective.  First, both $m$ and $n$ are very large (e.g.,
typically $m,n\ge 10^6$).  Second, there is a special type of redundancy
that is present in (\ref{eqn-min}) due to similarity between data points;
namely, if $\mathcal{I}$ is a random subset of indices of $\{1,2,\ldots, n\}$, then
provided $\mathcal{I}$ is large enough, but $|\mathcal{I}|\ll n$ (e.g., $n=10^9$ and
$|\mathcal{I}|=10^5$), then
\begin{equation}\label{eqn-batch}
	\hat{f}(w) \defined
	\frac{1}{|\Iset |}\sum_{i\in \Iset} f_i(w) \approx
	\frac{1}{n}\sum_{i=1}^n f_i(w).	
\end{equation}

The underlying goal during the optimization phase in machine learning is to
find the ``best" set of model parameters $w$ so that the chosen model
function $p(x,w): \Re^d \times \Re^m \to \Re^o$ predicts the observed
target variable $y \in \Re^o$ as accurately as possible.  
The most popular approaches in machine learning include
(i) stochastic gradient
descent method, (ii) limited-memory \BFGS, and (iii) Hessian-free methods.  
Here we briefly describe each approach and describe their advantages and disadvantages.

\medskip

\noindent \textbf{(i) Stochastic gradient descent (\SGD) methods.}
The stochastic gradient descent (\SGD) method~\cite{Robbins1951}
is one of the most popular types of methods for solving machine learning problems.
For this iterative method, an index $j$ is randomly chosen from 
$\{1,2,\ldots n\}$ at each iteration and $w$ is updated as follows:
$$
w = w-\eta_j \nabla f_j(w),
$$
where $\nabla f_j$ denotes the gradient of $f_j$.  The parameter $\eta_j$
is referred to as the \emph{learning rate} in machine learning.  \SGD{} is
a very attractive algorithm for machine learning 
for several reasons.  First, it naturally exploits data set
redundancy described in (\ref{eqn-batch}); moreover, the iteration
complexity is independent of $n$.  In contrast, classical optimization
algorithms are explicitly dependent on $n$ and become much more unstable
when attempting exploit cheaper stochastic approximations of the
gradient~\cite{Nocedal2016,Byrd2016,Curtis20016,Curtis2015,Gower2016,
  Moritz2016,Schraudolph2007}.  Second, the algorithm
 comes with attractive convergence theory~\cite{Nocedal2016}.  Third,
 the \SGD{} algorithm readily responds to an on-line learning environment 
 (i.e., data is available in a sequential order instead of all-at-once)
where data observations may never repeat.
A fourth advantage occurs in the nonconvex setting
where the stochastic
nature of \SGD{} makes it much less likely to 
converge to inferior local minimums~\cite{Choromanska2014,Kawaguchi2016,Sagun2014}
than non-stochastic methods.

There are several important disadvantages associated with using
\SGD. To enhance the performance of \SGD{} in practice, developers
must fine-tune many hyper-parameters--leading to
many variations of \SGD{}, (e.g., see
~\cite{Abadi2016,Duchi2011,Ioffe2015,Kingma2014,Sutskever2013,Zeiler2012,
Zhang2015}).  One set of
hyper-parameter users must choose is a learning rate sequence (i.e.,
$\{\eta_j\}$).  If the learning rate is too small, the algorithm
may stall; on the other hand, if the learning rate is too large
the algorithm may not converge.
In practice, finding an effective sequence $\{\eta_j\}$ can require solving
the same problem many times to find the best sequence.  This dilemma has
led to a resurgence of interest in auto-tune algorithms that can aid the
\SGD{} user in this
search~\cite{Bengio2012,Bergstra2012,Bergstra2013,Bergstra2011,Dewancker2016,Le2011,Maclaurin2015,Snoek2012,Sparks2015}.
A second disadvantage with \SGD{} is that it is  inherently sequential, making it difficult to 
parallelize~\cite{Dean2012,Le2011,McMahan2014,Recht2011}.  

\bigskip

\noindent \textbf{(ii) Limited-memory \BFGS{} (\LBFGS{}).} 
One of the most popular classical algorithms in machine learning 
is the \LBFGS{} algorithm, which falls into
the class of limited memory quasi-Newton algorithms.  Like \SGD,
quasi-Newton algorithms require only first-order (gradient) information.
Quasi-Newton methods generate a sequence of iterates using the rule
\begin{equation}\label{eqn-qN}
	w_{k+1}
	= w_k +
\eta_k
	p_k,\quad \text{where}\quad p_k\defined -B_k^{-1}\nabla f(w_k),
\end{equation}
$B_k$ is a \emph{quasi-Newton} matrix that is updated at each iteration
using gradient information,
and $\eta_k$ 
is a suitably-defined step length (learning rate).
The update to $B_k$ is defined using 
sequences of vectors $\{s_j\}$ and $\{y_j\}$, which are given as
\begin{equation}\label{eq:sjyj}
	s_j\defined w_{j+1}-w_j \qquad \text{and} \qquad
	y_j\defined\nabla f(w_{j+1})-\nabla f(w_j),
\end{equation}
for $j=0,\ldots,k-1$.  The Broyden class of updates, parametrized
by $\phi\in\Re$, is the most widely-used
updating rule for $B_k$:
\begin{equation}\label{eqn-broyden}
 B_{k+1}  =B_k   -  \frac{1}{s_k ^TB_k  s_k }B_k  s_k s_k ^TB_k  +  
  	\frac{1}{y_k^Ts_k }y_ky_k^T +
         \phi (s_k^T B_k  s_k )v_kv_k^T,
        \end{equation}
where
\begin{equation*}
         v_k = \frac{y_k}{y_k^Ts_k} - \frac{B_k  s_k}{s_k^TB_k  s_k}.
\end{equation*}
In practice, $B_0$ is usually taken to be a positive scalar multiple of the
identity.  In large-scale optimization, \emph{limited-memory}
quasi-Newton methods are used to bound storage requirements and promote
efficiency.  In this case, only the $r$ most-recently computed pairs
$\{(s_j,y_j)\}$ are used to build $B_{k+1}$, i.e.,  only
 the most up-to-date information is used to model the Hessian matrix.
The value of  $r$ is typically very small so that $r\ll n$.

While the matrices in the sequence $\{B_j\}$ are symmetric by construction,
different choices of $\phi$ lead to sequences of matrices with
different properties.
The most well-known member of the Broyden class of updates is the
\emph{Broyden-Fletcher-Goldfarb-Shanno (\BFGS)} update, which is obtained
by setting $\phi=0$. 
Provided $B_0$ is positive definite and $y_i^Ts_i>0$ for each $i=0,\ldots,
k-1$, then the \BFGS{} update generates a sequence of symmetric
positive-definite matrices.  (The condition $y_i^Ts_i>0$ for each $i$ can
be enforced using a \emph{Wolfe line search} to compute $\eta_k$ in
(\ref{eqn-qN}).)  One reason why the \BFGS{} update is the
preferred update is that
there is an efficient way to solve linear systems with $B_k$,
making the computation of $p_k$ in (\ref{eqn-qN})
affordable~\cite{Noc80}.  
It is worth noting that of all the quasi-Newton updates available,
the limited-memory \BFGS{} (\LBFGS) update
has been used almost exclusively by researchers in machine learning.

\LBFGS{} has several advantages in the machine learning setting.
First, the computation of $\nabla f(w)$ benefits from a
parallel-programming environment.  Second, while there are only a few
hyper-parameters that the user may tune, such as 
the number of weights ($m$) used and the scaling
for the initial matrix $B_0$, there are known standard
initializations and values used by the optimization community; that is, \LBFGS{}
does not require manual tuning.

\LBFGS{} has a number of disadvantages for solving problems in 
machine learning, especially in deep learning, where
the network is composed of multiple cascading layers.
First, it cannot be used in an on-line learning
environment without significant modifications that limit its scalability to
arbitrarily large data sets.  (This has given rise to recent research into
\emph{stochastic} \LBFGS{} variations that have thus far been unable to
maintain the robustness of classical \LBFGS{} in a stochastic mini-batch
environment~\cite{Nocedal2016,Byrd2016,Curtis20016,Curtis2015,
Gower2016,Moritz2016,Schraudolph2007}.)  A third disadvantage of \LBFGS{}
occurs if one tries to enforce positive definiteness of the
\LBFGS{} matrices in a nonconvex setting.  In this case, \LBFGS{} has the
difficult task of approximating an indefinite matrix (the true Hessian)
with a positive-definite matrix $B_k$, which can result in the generation
of nearly-singular matrices $\{B_k\}$.  Numerically, this creates need
for heuristics such as periodically reinitializing $B_k$ to a multiple of
the identity, effectively generating a steepest-descent direction in the
next iteration.   This can be a significant disadvantage for 
neural network problems
where model quality is highly correlated with the quality of
initial steps~\cite{Ma12}. 

\bigskip

\noindent \textbf{(iii) Hessian-free (HF) methods.} 
A third family of algorithms of interest come from classical algorithms
that can leverage the ability to perform Hessian-vector multiplies without
needing the entire Hessian matrix
itself~\cite{dauphin14SFN,maGN,Ma11,Ma12}; for this reason, as
in~\cite{maGN,Ma12}, we will refer to this class as \emph{Hessian-free}
algorithms.  These algorithms perform approximate updates of the form
\begin{equation}\label{eqn-pd}
w_{k+1} = w_k + \eta_k p_k \quad \text{with} \quad \nabla^2 f(w_k) p_k = -\nabla f(w_k),
\end{equation}
where $p_k$ is an approximate Newton direction obtained computed using a
\emph{conjugate-gradient}-like (\CG-like) algorithm and $\eta_k$ is the step length.  Traditional \CG{}
algorithms assume $\nabla^2 f(w_k)$ is positive definite and solve for $p_k$ in
(\ref{eqn-pd}) using only matrix-vector products, and thus, are applicable
in large problems in machine learning. Because $\nabla^2 f(w_k)$ may be
indefinite in deep learning problems, modified variants are needed to adapt
for local nonconvexity; we refer to such approaches as \emph{modified
  conjugate-gradient} algorithms (\MCG). 

  Remarkably, Martens~\cite{maGN}, was able to show that
  Hessian-free methods were able to achieve out-of-the-box competitive
  results compared to manually-tuned \SGD{} on deep learning problems.
  Moreover,
  Pearlmutter~\cite{Pe94}  was able to show that matrix-vector products could
  be computed at a computational cost on the order of a gradient
  evaluation.  However, since multiple matrix-vector products can be
  required to solve (\ref{eqn-pd}), the iteration complexity of
  \MCG{} is significantly greater than \LBFGS.
Thus, despite its allure of
  being a tune-free approach to deep learning, Hessian-free methods are for the most part
  unused and unexplored in practice.
  
  \bigskip
  
\noindent \textbf{Contributions of the proposed method.}  
While the \BFGS{} update is the most widely-used type of quasi-Newton method
for general optimization as well as general machine learning,
it enjoys certain benefits (given by guaranteed positive-definite Hessian approximations)
that may actually hinder it in solving large nonconvex optimization problems.
Our proposed approach is based on a different quasi-Newton update,
namely the  \emph{symmetric rank-1} (\SR1{}) update,
which allows for indefinite Hessian approximation.  We use a trust-region framework
(see e.g.,  \cite{ConGT00a})
because this framework can accommodate indefinite Hessian approximations 
more easily (see \cite{NocW06}).  
We also present a stochastic extension of our proposed approach,
which improves computational time because it does not 
compute the full gradient at each iteration.

\section{L-SR1 trust-region methods}
\label{sec:background}
We begin by discussing the \SR1{} 
update and trust-region methods for large-scale optimization.

\subsection{The SR1 update}\label{sec-sr1}

The 
\SR1{} update is the
unique rank-one update in the Broyden class satisfying the so-called \emph{secant condition:
$$
	B_{k+1}s_k = y_k.
$$
}
This update occurs by setting
$\phi = y_k^Ts_k/(y_k^Ts_k-s_k^TB_ks_k)$ in (\ref{eqn-broyden}); in this case,
\begin{eqnarray}\label{eqn-SR1}
	B_{k+1} &=& B_k  + \frac{1}
	{s_k^T(y_k  - B_k s_k ) }(y_k -B_k s_k )(y_k  - B_k s_k )^T,
\end{eqnarray}
where $s_k$ and $y_k$ are defined in \eqref{eq:sjyj}.
At each iteration, we assume 
$(y_k - B_ks_k)^Ts_k \ne 0$, i.e., all of the updates are well-defined;
the update is skipped otherwise  (see \cite[Sec.\ 6.2]{NocW06}).
This update has the distinction of being the only rank-one update in the
Broyden class of updates.  Moreover, this update is self-dual: The
recursion (\ref{eqn-SR1}) can be used to generate $B_{k+1}^{-1}$ by
interchanging $y_k$ and $s_k$ everywhere in (\ref{eqn-SR1}) and
initializing with $B_0^{-1}$.  
Thus, linear systems with SR1 matrices can be solved efficiently.
An important aspect of the \SR1{} update
is that regardless of the sign of $y_i^Ts_i$ for each $i$, this update
generates a sequence of matrices that may be indefinite. It is precisely
this property of \SR1{} matrices that makes them attractive in applications
like deep learning where $f$ is nonconvex.

Decreasing the index by 1 in \eqref{eqn-SR1}, the \SR1{} update 
can be written recursively and compactly using the outer product representation
\begin{equation}\label{eqn-compactSR1}
	B_{k} \ = \ B_0 + 
	\begin{bmatrix}
	\\
	\Psi_k  \\
	\phantom{t}
	\end{bmatrix}
	\hspace{-.3cm}
	\begin{array}{c}
	\left  [ \ M_k^{\phantom{h}}  \right ] \\
	\\
	\\
	\end{array}
	\hspace{-.3cm}
	\begin{array}{c}
	\left [  \ \quad \Psi_k^{\transpose} \quad \ \right ] \\
	\\
	\\
	\end{array}
\end{equation}
where $B_0 = \gamma I$ for some $\gamma \ne 0$, 
 $\Psi_k$ is an $n \times k$ matrix and
 $M_k$ is a $k \times k$ matrix.
In the literature,  (\ref{eqn-compactSR1}) is referred to as the \emph{compact formulation}
of an \SR1{} matrix.
 In particular, Byrd et al.~\cite{ByrNS94} show that for \SR1{} matrices,
\begin{equation}\label{eq:PsiM}
	\Psi_k = Y_k\!  -\! B_0 S_k  \ \ \ \text{and} \ \  \
        M_k = (D_k \!+\! L_k \!+\! L_k^{\transpose} \!-\! S_k^{\transpose}\!B_0S_k)^{-1}\!.
\end{equation}
with
$S_k \defined [ \ s_0 \ \ s_1 \ \ s_2 \ \ \cdots \ \ s_{k-1} \ ] \ \in \ \Re^{n \times k},$ and
$Y_k \defined [ \ y_0 \ \ y_1 \ \ y_2 \ \ \cdots \ \ y_{k-1} \ ] \ \in \ \Re^{n \times k},$
and $L_k$ is the strictly lower triangular part, $U_k$ is the strictly
upper triangular part, and $D_k$ is the diagonal part of 
$
	S_k^{\transpose}Y_k =   L_k + D_k + U_k.
$
In our proposed approach, we use a limited-memory \SR1 (\LSR1{}) update,
where only $r$ of the most recent pairs $\{ s_j,y_j\}$ are stored,
where the value of $r$ is typically very small so that $r\ll n$.

While \SR1{} updates are one of many updates proven to theoretically
converge to the Hessian matrix at a minimizer, there is some
evidence that in practice \SR1{} updates 
 have superior convergence properties~\cite{ConGT91}.



\medskip

\noindent \textbf{The SR1 advantage:} Historically, the \SR1{} update fell
out of favor when it appeared to suffer from more algorithmic breakdowns
and instabilities than the \BFGS{} update; however, simple safeguards are
now used to adequately prevent instabilities and
breakdowns \cite[p.145]{NocW06}.  Over the last several decades, the \SR1{}
update has reemerged as the subject of much research; in fact, in \cite[p.118]{Gou06},
Gould states: ``[SR1] has now taken its place alongside the 
\BFGS{} method as the pre-eminent updating formula'' 

For machine learning, the \SR1{} update offers distinct advantages over the
\BFGS{} update: (i) In machine learning problems, Wolfe line searches to
enforce $y_k^Ts_k > 0$ in \BFGS{} methods
are too computationally expensive to use which has led to
the popular solution of skipping \BFGS{} updates, possibly
degrading the quality of the Hessian
approximation~\cite[p.146]{NocW06}; (ii) \SR1{} matrices exhibit better
convergence to the true Hessian (e.g., see the discussion on convergence in
Section~\ref{sec-sr1}); and (iii) if one tries to generate a sequence
of positive-definite \LBFGS{} matrices when modeling
 an indefinite Hessian, the matrices in this sequence 
 may become nearly
singular (i.e., highly ill-conditioned) with the smallest eigenvalue of
this sequence of matrices becoming close to zero.
Since machine learning problems are nonconvex,
it is worth noting that (ii) and (iii) suggest that
\SR1{} matrices may generate more accurate approximations than
positive-definite \LBFGS{} matrices of the true
Hessian.
Moreover, when (iii) occurs, the search direction obtained from a 
 \BFGS{}
method may be of poor quality, hindering convergence of the overall method.
In fact, research on \SR1{} methods have produced comparable, if not
better, results to \BFGS{} methods~\cite{ConGT88,ConGT91} on general
optimization problems.

\vspace{-.15cm}

\subsection{Large-scale trust-region methods}
Trust-region methods minimize a function $f$
by modeling changes in the objective function using quadratic models.  Each
iteration requires \emph{approximately} solving a \emph{trust-region
subproblem}.  Specifically, at the $k$th iteration, the $k$th
trust-region subproblem is given by
\begin{equation} \label{eq:TRsubproblem}
    p^* \ = \  \underset{p\in\Re^n}{\text{argmin}} \quad  \mathcal{Q}_k(p) \defined g_k^T p + \frac{1}{2} p^T B_k p
	\qquad
	\text{subject to} \quad \|p\| \le \delta_k,
\end{equation}
where $g_k \defined \nabla f(w_k)$, $B_k \approx \nabla^2 f(w_k)$, and
$\delta_k$ is a given positive \emph{trust-region radius}.
Basic trust-region methods update
the current approximate minimizer for $f$ only if the ratio
between the actual and predicted change in function value is
sufficiently large.  
If the ratio is sufficiently large, the update is accepted and
$w_{k+1}\gets w_k+p^*$.
When this
is not the case, $\delta_k$ is reduced and the trust-region subproblem is
resolved.  
The solution of the trust-region subproblem is the computational
bottleneck of most trust-region methods.
The primary advantage of using a trust-region method is that
$B_k$ does not have to be a positive-definite matrix; in particular,
it may be a limited-memory \SR1{} matrix.

Trust-region methods for general large scale optimization use an iterative
method to solve the trust-region subproblem.  It is well known that when
the two-norm is used to define the subproblem~(\ref{eq:TRsubproblem}), we
can completely characterize a global solution of the subproblem.  The
optimality conditions for the trust-region subproblem defined using the
two-norm are due to Gay~\cite{Gay81} and 
Mor\'e and Sorensen \cite{MorS84}:

\medskip

\noindent\textbf{Theorem:} \textsl{Let $\delta$ be a given positive constant. A vector $p^*$ is a
  global solution of the trust-region problem
  {\rm(\ref{eq:TRsubproblem})} if and only if $\|p^*\|_2 \le
  \delta$ and there exists a unique $\sigma^*\ge0$ such that
  $B_k+\sigma^* I$ is positive semidefinite with
\begin{equation}                              \label{eq:optimality}
  (B_k+\sigma^* I)p^* = - g_k,  \quad \text{and} \quad
   \sigma^*(\delta_k - \|p^*\|_2)=0.
\end{equation}
Moreover, if $B_k+\sigma^* I $ is positive definite, then the global
minimizer is unique.}

\medskip

\medskip

\noindent
Most iterative methods for solving the trust-region subproblem
assume it is possible to compute matrix-vector products with the
true Hessian, but matrix factorizations are too computationally
expensive to perform.  Examples of such methods include
Steihaug's method~\cite{Ste83}, Toint's method~\cite{Toi81},
the \GLTR{} method~\cite{GouLRT99}, \phasedSSM{}~\cite{ErwGG09},
Hager's \SSM{} method~\cite{Hag01},
Erway and Gill's \SSM{} method~\cite{ErwG09}, and the \LSTRS{} method~\cite{rojas2001, Rojas2008}.  In 
many machine learning applications, these methods
are too computationally expensive for use on the full data set.

\subsection{Solving the L-SR1 trust-region subproblem}\label{section:SolveLSR1}
Solving the trust-region subproblem \eqref{eq:TRsubproblem} 
is generally the computational bottleneck of trust-region methods.
In recent work by the authors \cite{BruEM17},
an efficient algorithm for solving the trust-region subproblem \eqref{eq:TRsubproblem} is proposed, where
$B_k$ is the SR1 quasi-Newton update. 
To efficiently solve the subproblems, we 
exploit the structure of the \LSR1 matrix
to obtain global solutions to high accuracy.
 We summarize this approach here.

To begin, we transform the optimality equations (\ref{eq:optimality})
using the spectral decomposition of $B_k$, which we outline here (see~\cite{BruEM17} for more details).
Given the compact formulation of $B_k$,  $B_{k}=B_0+\Psi M \Psi^{\transpose}$, and 
the ``thin'' \QR{} factorization
of $\Psi$, $\Psi=QR$,
then
$
	B_{k}=\gamma I + QRMR^{\transpose}Q^{\transpose},
$
where $B_0=\gamma I$ and $\gamma>0$ (see \cite{Burdakov2016,ErwM15}).  Since $RMR^{\transpose}$ is a small 
$k \times k$ matrix, its spectral decomposition $V\hat{\Lambda}V^{\transpose}$ 
can be quickly computed.  Then, letting $\Pi \defined [ \
QV \ \ \ (QV)^{\perp} ] \in \Re^{n \times n}$ such that $\Pi^{\transpose}\Pi = \Pi \ \! \Pi^{\transpose}\! =
I$, the spectral decomposition of $B_k$ is given by 
\begin{equation}\label{eqn-Beig}
	\phantom{.} \hspace{-.3cm} B_k = \Pi\Lambda \Pi^{\transpose}, \ 
	\text{where }
	\Lambda  \defined
	\begin{bmatrix}
		\Lambda_1 & 0 \\
		0 & \Lambda_2
	\end{bmatrix} = 
	\begin{bmatrix}
		\hat{\Lambda} + \gamma I & 0 \\
		0 & \gamma I
	\end{bmatrix},
\end{equation}
where $\Lambda_1$ $=$ diag(
$\hat{\lambda}_1+\gamma, \hat{\lambda}_2+\gamma, \dots, 
\hat{\lambda}_k+\gamma$)
$\in \Re^{k \times k},$ and $\Lambda_2 = \gamma
I_{n-k}$.  Using the spectral decomposition of $B_k$, the optimality equations
(\ref{eq:optimality}) become
\begin{subequations}
\begin{align}
	\label{eq:newopta}
	(\Lambda + \sigma^* I)v^* & = -\Pi^{\transpose}\!g \\
	\label{eq:newoptb}
	\sigma^* ( \delta_k - \| v^* \|_2) & = 0, 
\end{align}
\end{subequations}
for some scalar $\sigma^*\ge 0$ and $v^*= \Pi^{\transpose}p^*$, where $p^*$ is the global
solution to \eqref{eq:TRsubproblem}.   The Lagrange multiplier $\sigma^*$
can be obtained by substituting the expression
\begin{equation}\label{eq:normv}
	\| v^* \|_2^2 = \|(\Lambda+\sigma^*I)^{-1}\Pi^{\transpose}\!g\|_2^2
	=
	\sum_{i=1}^k
	\frac{(\Pi^{\transpose}\!g)_i^2}{(\hat{\lambda}_i+\gamma}+\sigma^*)^2
	+
	\frac{ \| \left ((QV)^{\perp} \right )^{T}\!g \|_2^2}{(\gamma + \sigma^*)^2}
\end{equation}
from \eqref{eq:newopta} into \eqref{eq:newoptb}
and finding the largest solution $\sigma^*$ to the
secular equation
\begin{equation}\label{eq:secular}
	\phi(\sigma) = \frac{1}{\| v(\sigma) \|_2} - \frac{1}{\delta} = 0
\end{equation}
using Newton's method.
Once $\sigma^*$ is obtained, $v^*$ can be computed from 
\eqref{eq:newopta} and as well as the solution $p^* = \Pi v^*$ to the original trust-region
subproblem \eqref{eq:TRsubproblem}. 
Note that in only one special case, the so-called \emph{hard case}~\cite{ConGT00a,MorS83}, the above method will not work because the
computed
$\|p^*\|$ will not lie on the boundary of the trust region.  In this case, the
global solution to the trust-region subproblem is given by 
${p}^* = \hat{p}^*+\alpha u_{\text{min}}$, 
where $\hat{p}^* = -(B_k+\sigma^*I)^{\dagger}g$,
$u_{\text{min}}$ is a column of $P$ and is
an eigenvector associated with the most negative eigenvalue of $B_k$ and can be
computed from the partial spectral decomposition outlined above, and 
$\alpha = \pm \sqrt{\delta^2 - \|\hat{p}^*\|^2}$ is a scalar
to ensure that $p^*$ lies on the boundary.
(See~\cite{BruEM17} for details on the hard case.)

\subsection{Proposed approach}
The proposed \LSR1 Trust-Region Method ({\small L-SR1-TR})
is outlined in Algorithm 1, and the trust-region subproblem
solver is described in Algorithm 2.  For details on the subproblem
solver and all related computations, see~\cite[Algorithm 1]{BruEM17}.

\begin{algorithm}[htp]
\caption{L-SR1 Trust-Region (L-SR1-TR) Method} 
\label{alg}
\begin{algorithmic}[1]
\REQUIRE $x_0\in \Re^n$, \ $\delta_0>0$, \ $\epsilon > 0$, \ $\gamma_0>0$\
, \ $0 \leq \tau_1 < \tau_2< 0.5 < \tau_3<1$, \\
$0<\eta_1<\eta_2\leq 0.5<\eta_3<1<\eta_4$, $\alpha=1$
\STATE Compute $g_0$
\FOR{$k=0,1,2,\ldots$}
        \IF{$\|g_k\|\leq\epsilon$}
        \RETURN
        \ENDIF
        \STATE Choose at most $m$ pairs $\{s_j, y_j\}$
	\STATE Compute $p^*$ using Algorithm \ref{alg:SOBS}
	\STATE 
	Compute step-size $\alpha$ with Wolfe line-search on $p^\ast.$ Set $p^\ast = \alpha p^\ast.$
        \STATE Compute the ratio $\rho_k = (f(w_k+p^*)-f(w_k))/\mathcal{Q}_k(p^*)$
                \STATE $w_{k+1}=w_k+p^*$
                \STATE Compute $g_{k+1}$, $s_k$, $y_k$, and $\gamma_k$
         \IF{$\rho_k < \tau_2$}
                \STATE $\delta_{k+1} = \min \left({\eta_1}\delta_k, {\eta_2}\|s_k\|_2 \right)$
        \ELSE
                \IF{$\rho_k \geq \tau_3$ \AND $\|s_k\|_2 \geq {\eta_3} \delta_k$\
}
                        \STATE $\delta_{k+1} = {\eta_4} \delta_k$
                \ELSE
                        \STATE{$\delta_{k+1}=\delta_k$}
                \ENDIF
        \ENDIF
\ENDFOR
\end{algorithmic}
\end{algorithm}

\begin{algorithm}[h]
\begin{algorithmic}[1]
\STATE Compute  the Cholesky factor $R$ of $\Psi^T\Psi$;
\STATE Compute the spectral decomposition $RMR^T = U\hat{\Lambda}U^T$
(with $\hat{\lambda}_1 \le \cdots \le \hat{\lambda}_ k$);
\STATE Let $\Lambda_1 = \hat{\Lambda} + \gamma I$;
\STATE Let $\lambda_{\min} = \min\{\lambda_1, \gamma\}$, and let $r$ be its algebraic multiplicity;
\STATE Define $g_{\parallel} \defined  (\Psi R^{-1}U)^Tg$;  
\IF{$\lambda_{\min} > 0$ \textnormal{ and } $\phi(0) \ge 0$ [the unconstrained minimizer is feasible]}
	\STATE 	$\sigma^* = 0$ and compute $p^* = -B_k^{-1}g_k$;
\ELSIF{$\lambda_{\min} \le 0$ \textnormal{ and } $\phi(-\lambda_{\min}) \ge 0$}
		\STATE $\sigma^* = -\lambda_{\min}$;
		\STATE 	Solve $(B_k + \sigma^* I)p^* = -g_k$;
		\IF{$\lambda_{\min} < 0$ [the hard case]}
			\STATE Compute $\alpha^*$ and $u_{\text{min}}^*$;
			\STATE $p^* \leftarrow p^* + \alpha^*u_{\text{min}}^*$;
		\ENDIF
\ELSE
		\STATE Use Newton's method to find $\sigma^*$, a root of $\phi$, in $(\max\{-\lambda_{\min}, 0 \}, \infty)$;
		\STATE Solve $(B_k + \sigma^* I)p^* = -g_k$;
\ENDIF
\caption{Orthonormal Basis SR1 Method}\label{alg:SOBS}
\end{algorithmic}
\end{algorithm}

\subsection{Stochastic extension} 
In this section, we describe how to improve the efficiency
of \LSRTR{} by incorporating approximate gradient calculations
derived from random sampling of the training data.  
The use of
  mini-batches can be motivated by considering (\ref{eqn-batch}), which
  suggests a (potentially significantly smaller) subset may be sufficient to obtain a meaningful
  descent direction for the true objective function.  Mini-batching
refers to the process whereby a subset of training data
is used to approximate the full gradient calculation each iteration.  That is, instead of using $g_k$ the gradient is approximated by
\begin{equation}\label{eqn-approxgrad}
	\tilde{g}_k \defined \frac{1}{|\Iset_k|} \sum_{i \in \Iset_k} \nabla f_i(w_k) \approx  \nabla f(w_k),
\end{equation}
where $\Iset_k \subseteq \{ 1, 2, \dots, n \}$.  
Obviously as $|\Iset_k|$ decreases the savings in computational cost
must be weighed against the resulting degradation in progress.
Remarkably first-order algorithms like \SGD{} function behave quite well
even if $\Iset_k$ consists of only a single observation at each iteration.
The reason is that the gradient error can be shown to cancel itself
out in the expected value sense.  However, for higher-order approaches such as quasi-Newton methods, the batch size typically
needs to be larger. Further, batch sizes need not be fixed--strategies for dynamically increasing batch size
have been studied in~\cite{ByChNoWu12,Me17,SmKiLe17}.
In our experience, we have found robustness in starting with an
arbitrarily small batch size and increasing the batch size whenever
progress towards the minimizer appears to stagnate.

For this work, we use overlapping training samples~\cite{BeNoTa16}, 
requiring that at each iteration
the mini-batch $\Iset_k$ is formed using a prescribed percentage of overlap
with the previous mini-batch.  That is, at the $k$th iteration, the
overlap $\Iset_k\cap \Iset_{k-1}$ is predetermined.  Using overlapping
mini-batches and (\ref{eqn-approxgrad}), the
quasi-Newton pairs $\{(s_{k-1}, y_{k-1})\}$ are computed as
$$
	s_{k-1} = w_k - w_{k-1} \quad \text{and} \quad y_{k-1} = \tilde{g}_{k} - \tilde{g}_{k-1}.
$$
As with SGD, there is inherent noise in the search direction 
due to using (\ref{eqn-approxgrad}) instead of the true gradient.
A common approach to mitigate the effects of this noise is to use the
principles of momentum, which is the exponential averaging of recent
steps.  Specifically, in our approach
we add the following momentum term at the end of each iteration:
\[
v_{k} = \mu v_{k-1} + (w_{k} - w_{k-1}).
\]
The most commonly-used value for the momentum parameter is $\mu=.9$
(see e.g., \cite{Sutskever2013}).
The momentum step $v_k$ is grafted into the 
trust-region solution $p^*$ from \eqref{eq:TRsubproblem} as follows:
\begin{subequations}
\begin{align}
	\label{eq:vk_momentum}
  v_k &\leftarrow \mu \min\left(1.0, \dfrac{\delta_k}{\|v_k\|} \right) v_k,  \\
  	\label{eq:pstar_momentum}
  p^* &\leftarrow \min\left(1.0, \dfrac{\delta_k}{\|
  p^*+v_k\|}\right)(p^*+v_k),
\end{align}
\end{subequations}
where $\delta_k$ denotes the current trust-region radius (see Algorithm~\ref{alg}).
Note that if $\mu = 0$, then the trust-region step $p^*$ would be left
unchanged by the above transformation. 
We call this approach Limited-Memory Stochastic \SR1{}
Trust-Region (\LSSRTR{}), and it differs from Alg.\ \ref{alg} (\LSRTR{})
in three specific places: Line 1, which uses the approximate gradient $\tilde{g}_0$
instead the exact initial gradient $g_0$;
Line 7, which incorporates the momentum step $v_k$ into the
trust-region subproblem solution $p^*$; and 
Line 11, which uses the approximate gradient 
$\tilde{g}_{k+1}$ instead the exact initial gradient $g_{k+1}$
and would compute $y_k$ using the approximate gradient, i.e., 
$y_k = \tilde{g}_{k+1} -\tilde{g}_k$.  \LSSRTR{} is outlined in Algorithm \ref{LSSR1}.  

\LSSRTR{} 
requires the use of two new hyper-parameters (the
momentum parameter and the mini-batch overlap parameter).
Unlike \SGD{}  where convergence is very sensitive
to the learning rate, we have found that convergence of the 
proposed method is not adversely affected by small changes in 
these hyper-parameters.
In fact, we have found that these parameters
are no more sensitive to tuning than the existing
quasi-Newton parameters such as memory size and the trust-region
expansion and contraction parameters (see $\eta_1$ and $\eta_2$ in Algorithm~\ref{alg}).

\begin{algorithm}[htp]
\caption{Limited-Memory Stochastic SR1 Trust-Region (L-SSR1-TR) Method} \label{LSSR1}
\begin{algorithmic}[1]
\REQUIRE $x_0\in \Re^n$, \ $\delta_0>0$, \ $\epsilon > 0$, \ $\gamma_0>0$\
, \ $0 \leq \tau_1 < \tau_2< 0.5 < \tau_3<1$, \\
$0<\eta_1<\eta_2\leq 0.5<\eta_3<1<\eta_4$, $\alpha=1$, $\mu = 0.9$
\STATE Compute initial batch $\Iset_0$ and $\hat{g}_0$
\FOR{$k=0,1,2,\ldots$}
        \IF{$\|\hat{g}_k\|\leq\epsilon$}
        \RETURN
        \ENDIF
        \STATE Choose at most $m$ pairs $\{s_j, y_j\}$
	\STATE Compute $p^*$ using Algorithm \ref{alg:SOBS}
	\STATE $v_k = \mu v_{k-1} + (w_k - w_{k-1})$
	\STATE $v_k = \mu \min( 1.0, \delta_k/\|v_k \|) v_k$
	\STATE $p^* = \min (1.0, \delta_k / \| p^* + v_k \|)(p^* + v_k)$
	\STATE Compute step-size $\alpha$ with Wolfe line-search on $p^\ast.$ Set $p^\ast = \alpha p^\ast.$ 
        \STATE Compute the ratio 
       $\rho_k = (\hat{f}(w_k+p^*)-\hat{f}(w_k))/\mathcal{Q}_k(p^*)$
                \STATE $w_{k+1}=w_k+p^*$
                \STATE Compute $\hat{g}_{k+1}$, $s_k$, $y_k$, and $\gamma_k$
        \IF{$\rho_k < \tau_2$}
                \STATE $\delta_{k+1} = \min \left({\eta_1}\delta_k, {\eta_2}\|s_k\|_2 \right)$
        \ELSE
                \IF{$\rho_k \geq \tau_3$ \AND $\|s_k\|_2 \geq {\eta_3} \delta_k$\
}
                        \STATE $\delta_{k+1} = {\eta_4} \delta_k$
                \ELSE
                        \STATE{$\delta_{k+1}=\delta_k$}
                \ENDIF
        \ENDIF
         \STATE Compute batch $\Iset_{k+1}$ in accordance with 
         Assumption \ref{as:stall1} 
\ENDFOR
\end{algorithmic}
\end{algorithm}

\subsubsection{Line-search analysis}

Here, we demonstrate that under some mild assumptions,
the line-search step in Algorithm \ref{LSSR1} 
is guaranteed to a step length that sufficiently decreases $\hat{f}(w)$.
We first state these assumptions.

\begin{assumption}\label{as:batch} Let the mini-batch set of observations 
    $\Iset_k$ be sampled randomly with $n_b=|\Iset_k|.$  Then there 
    exists a positive function $\gamma: \mathbb{R} \to \mathbb{R} $ 
    such that:
\begin{equation}\label{eqn-minibatch}
    \left \|
    \left \{ 
    	\dfrac{1}{|\Iset_k|}\sum_{i\in \Iset_k} \nabla f_i(w) 
    \right \}
    - \nabla f(w) \right \|_\infty\le \gamma(n_b)
\end{equation}
    where $\gamma(n_b) \to 0$ as $n_b \to n$. 
\end{assumption} 
\noindent 
This assumption suggests that as $|\Iset_k|$ increases, $\tilde{g}_k$  in 
\eqref{eqn-approxgrad} approaches $\nabla f(w)$.
\begin{assumption}\label{as:lnsrch}  
The line search in Algorithm \ref{LSSR1} is performed only 
on the sampled function $\hat{f}(w)$.
\end{assumption}
\noindent This assumption requires that the line search uses 
the same batch that was used to define the trust-region subproblem.
\begin{assumption} 
Algorithm \ref{LSSR1}
negates the search direction whenever
$\hat g(w)^T p^\ast > 0$, where $p^*$ is from \eqref{eq:pstar_momentum}.
\end{assumption}

\noindent Next, we make the following assumption to ensure
that we are making progress in decreasing the full
empirical risk $f(w)$.

\begin{assumption}\label{as:stall1} The objective $f$ is fully evaluated 
    every $J>1$ iterations 
    (say, at iterates $w_{J_0}, w_{J_1}, w_{J_2}, \dots$,
    where $0 \le J_0 < J$ and $J = J_1 - J_0 = J_2 - J_1 = \cdots$)
    and nowhere else in the algorithm.
    The batch size $n_b$ is monotonically
    increased whenever 
    \[	 f(w_{J_{\ell}}) >
    f(w_{J_{\ell-1}}) - \tau\]
    for some $\tau > 0.$
\end{assumption} 
\noindent This assumption states that if progress is not made in 
decreasing $f$, the batch size is increased to reduce the noise associated
with using a subsampled surrogate function $\hat{f}$.

\bigskip

Given Assumptions 1 through 4, we now present convergence results
for  \LSRTR{}.  The theorem below asserts the trust-region radius
update will always succeed.
\begin{theorem}
	At iteration $k$, given the batch $\Iset_k$,
	the line-search step in Algorithm \ref{LSSR1} 
	can never fail.
    That is, there
    exists $\alpha > 0$ such that the strong-Wolfe conditions hold:
    \begin{enumerate}
        \item $\hat{f}(w_k + \alpha p^\ast) \le \hat{f}(w_k) + c_1\alpha \nabla\hat{f}(w_k)$
        \item $|\nabla\hat{f}(w_k+\alpha p^\ast)^T p^\ast| \le  c_2 | \nabla\hat{f}(w_k)^T p^\ast|.$
    \end{enumerate}
\end{theorem}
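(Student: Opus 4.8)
The plan is to reduce the claim to the classical existence theorem for step lengths satisfying the strong-Wolfe conditions (see, e.g., \cite[Lemma~3.1]{NocW06}), whose hypotheses are: $\hat{f}$ is continuously differentiable, the search direction $p^*$ is a \emph{descent} direction at $w_k$ (so that $\nabla\hat{f}(w_k)^T p^* < 0$), $\hat{f}$ is bounded below along the ray $\{w_k + \alpha p^* : \alpha > 0\}$, and the constants satisfy $0 < c_1 < c_2 < 1$. Once these four ingredients are in place, the lemma produces a nonempty interval of admissible step lengths, which is exactly the assertion that the line search ``can never fail.''

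First I would dispense with the regularity and boundedness hypotheses. Continuous differentiability of $\hat{f}$ is immediate, since $\hat{f}$ is a finite average of the smooth losses $f_i$. Boundedness below of the restricted function $\phi(\alpha) \defined \hat{f}(w_k + \alpha p^*)$ follows because in the machine-learning setting each $f_i$, and hence $\hat{f}$, is bounded below by a constant independent of $w$ (loss functions are bounded below, typically by zero).

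The crux is verifying strict descent, $\phi'(0) = \nabla\hat{f}(w_k)^T p^* < 0$, where $p^*$ is the \emph{final} momentum-grafted step of \eqref{eq:pstar_momentum}. Writing $p^* = c\,(p^{\mathrm{TR}} + v_k)$ with $c>0$, where $p^{\mathrm{TR}}$ is the raw trust-region solution and $v_k$ is the scaled momentum term of \eqref{eq:vk_momentum}, the directional derivative splits as $\nabla\hat{f}(w_k)^T p^* = c\big(\nabla\hat{f}(w_k)^T p^{\mathrm{TR}} + \nabla\hat{f}(w_k)^T v_k\big)$. The optimality conditions \eqref{eq:optimality} give $p^{\mathrm{TR}} = -(B_k+\sigma^* I)^{\dagger}\nabla\hat{f}(w_k)$ with $B_k+\sigma^* I$ positive semidefinite, so $\nabla\hat{f}(w_k)^T p^{\mathrm{TR}} \le 0$, with strict inequality whenever $B_k+\sigma^* I$ is positive definite and $\nabla\hat{f}(w_k)\ne 0$; however, the momentum contribution $\nabla\hat{f}(w_k)^T v_k$ has indefinite sign, so the sum is not automatically negative. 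This is precisely where Assumption~3 enters: whenever $\nabla\hat{f}(w_k)^T p^* > 0$ the algorithm replaces $p^*$ by $-p^*$, giving $\nabla\hat{f}(w_k)^T p^* \le 0$ after the step; and since the line search is reached only when $\|\nabla\hat{f}(w_k)\| > \epsilon$ (otherwise Line~3 of Algorithm~\ref{LSSR1} returns), the gradient is nonzero. I expect the main obstacle to be upgrading this $\le 0$ to a strict $<0$, i.e., excluding the degenerate configuration in which the grafted step is exactly orthogonal to $\nabla\hat{f}(w_k)$; I would handle this either by a genericity argument or by constraining the momentum scaling in \eqref{eq:vk_momentum} so that the strictly descending trust-region component dominates and strict descent is preserved.

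With strict descent in hand, the remainder is the textbook construction. Define the Armijo line $\ell(\alpha) \defined \hat{f}(w_k) + c_1\alpha\,\nabla\hat{f}(w_k)^T p^*$. Since $0 < c_1 < 1$ and $\phi'(0) < 0$, the line $\ell$ lies strictly above $\phi$ for small $\alpha$; because $\ell(\alpha)\to-\infty$ while $\phi$ is bounded below, the graphs must cross, and I would let $\alpha_1$ be the smallest crossing point, so condition~(1) holds on $(0,\alpha_1]$. Applying the mean value theorem to $\phi$ on $[0,\alpha_1]$ yields some $\alpha_2\in(0,\alpha_1)$ with $\phi'(\alpha_2) = c_1\phi'(0)$; since $c_1 < c_2$ and $\phi'(0)<0$, this gives $|\phi'(\alpha_2)| = c_1|\phi'(0)| \le c_2|\phi'(0)|$, which is condition~(2). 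Hence $\alpha_2$ satisfies both strong-Wolfe conditions, completing the argument. Everything after the strict-descent step is the standard line-search calculus, so the orthogonal degenerate case flagged above is the only genuine difficulty.
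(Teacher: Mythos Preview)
Your approach is essentially the same as the paper's: establish smoothness of $\hat{f}$ from smoothness of the $f_i$, use Assumption~3 to guarantee that $p^*$ is a descent direction for $\hat{f}$, and then invoke the classical existence theorem for strong-Wolfe step lengths. Your treatment is in fact more careful than the paper's three-sentence proof---you explicitly check boundedness below and flag the degenerate case $\nabla\hat{f}(w_k)^T p^* = 0$, which the paper's proof does not address.
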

\begin{proof}
    Because each $f_i(w)$ is smooth, the function $\hat{f}(w)$ 
        is likewise smooth.  Thus because the search direction
        is a descent direction for $\hat{f}(w)$, the result follows.
    Because of Assumption \ref{as:lnsrch} and smoothness assumptions on
    elements $f_i(w)$, classical line-search proofs hold so long
    as the batch $\Iset_k$ is held constant and not resampled during this 
    stage.
\end{proof}  
\begin{theorem}
    If the momentum parameter $\mu \to 0$,
    then either  
    \begin{equation}\label{eq:nbn}
    \liminf_{k\to\infty}\|\nabla f(w_k)\| = 0
    \quad \text{or} \quad
    \liminf_{k\to\infty} f(w_k) = -\infty.
    \end{equation}
\end{theorem}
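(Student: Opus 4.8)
The plan is to prove the dichotomy by splitting on whether the full objective $f$ is bounded below along the iterates $\{w_k\}$, while exploiting the fact that as $\mu\to 0$ the momentum correction in \eqref{eq:vk_momentum}--\eqref{eq:pstar_momentum} vanishes, so that the accepted direction reduces to the pure trust-region step $p^*$ of \eqref{eq:TRsubproblem}. The first observation I would make is that the global subproblem solution satisfies $(B_k+\sigma^* I)p^* = -\tilde g_k$ with $B_k+\sigma^* I$ positive semidefinite (the optimality conditions quoted before \eqref{eq:optimality}), so that $-\tilde g_k^{\transpose} p^* \ge 0$; combined with Assumption 3 (negation when $\hat g^{\transpose}p^*>0$), this guarantees $p^*$ is a genuine descent direction for $\hat f$. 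Letting $\mu\to 0$ removes the possibility that the grafted term $v_k$ tilts $p^*$ into a non-descent direction, so the descent property, and hence the applicability of the preceding theorem on the Wolfe line search, is preserved in the limit.

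If $\liminf_{k\to\infty} f(w_k) = -\infty$, this is exactly the second alternative in \eqref{eq:nbn} and requires no further work, so I would focus on the complementary case in which $f$ is bounded below along $\{w_k\}$ and show the first alternative holds. Here Assumption \ref{as:stall1} is the key device: the full objective is sampled at $w_{J_0}, w_{J_1}, \dots$, and whenever the decrease over a cycle of $J$ iterations fails to reach $\tau$, the batch size $n_b$ is increased. If the batch were increased only finitely often, then for all large $\ell$ we would have $f(w_{J_\ell}) \le f(w_{J_{\ell-1}}) - \tau$, forcing $f(w_{J_\ell})\to -\infty$ and contradicting boundedness below. Hence the batch size is increased infinitely often; since $n_b$ is monotone and bounded above by $n$, it must reach $n$, after which $\hat f \equiv f$ and $\tilde g_k = \nabla f(w_k)$ exactly. (More generally one may invoke Assumption \ref{as:batch}, $\gamma(n_b)\to 0$, to conclude $\|\tilde g_k - \nabla f(w_k)\|_\infty \to 0$.)

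Once the gradients are exact (or asymptotically exact), the remaining step is a standard Zoutendijk-type argument for a line-search method. Using Assumption \ref{as:lnsrch} (the line search acts on the same sampled function used to form the subproblem), the strong-Wolfe conditions guaranteed by the previous theorem, and the smoothness of each $f_i$, I would derive the summability estimate $\sum_k \cos^2\theta_k\,\|\nabla f(w_k)\|^2 < \infty$, where $\theta_k$ is the angle between $p^*$ and $-\nabla f(w_k)$; boundedness of $f$ below makes the telescoped function decrease finite, which is what forces the sum to converge, and $\liminf_k \|\nabla f(w_k)\| = 0$ then follows provided $\cos\theta_k$ is bounded away from zero along a subsequence. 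I expect the main obstacle to be establishing precisely this angle condition: from $(B_k+\sigma^* I)p^* = -\nabla f(w_k)$ one obtains
\[
\cos\theta_k \ \ge \ \frac{\lambda_{\min}(B_k+\sigma^* I)}{\lambda_{\max}(B_k)+\sigma^*},
\]
so the argument hinges on a uniform bound on the spectrum of the \LSR1{} matrices $B_k$ (and on $\sigma^*$). Controlling $\|B_k\|$ requires the limited-memory pairs $\{(s_j,y_j)\}$ to remain bounded and the \SR1{} update denominators $s_j^{\transpose}(y_j - B_j s_j)$ to stay bounded away from zero, which is exactly what the skipping safeguard following \eqref{eqn-SR1} is designed to ensure; making this quantitative, and reconciling it with the vanishing gradient-approximation error from the growing batch, is the delicate part of the proof.
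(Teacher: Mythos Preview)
Your proposal follows the same two-case skeleton as the paper: if the batch size stabilizes below $n$, Assumption~\ref{as:stall1} forces $f(w_{J_\ell})\to-\infty$; otherwise $n_b\to n$ and the method reduces to a deterministic line-search/trust-region scheme whose convergence is classical. The paper's proof is far terser than your sketch---the entire deterministic case is dispatched by a one-line citation to~\cite{NocY98}, with no Zoutendijk estimate, no angle bound, and indeed no explicit use of the hypothesis $\mu\to0$---so the spectral-control obstacle you carefully flag (bounding $\lambda_{\min}(B_k+\sigma^*I)/\lambda_{\max}(B_k+\sigma^*I)$ away from zero via the \SR1{} skipping safeguard) is something the paper simply does not engage with.
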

\begin{proof}
    For simplicity of notation, we
    will define $\hat w_i = w_{J_i}.$ 
    By {Assumption}~\ref{as:stall1}, the 
    objective function must monotonically reduce
    over the subsequence $\{ \hat{w}_i \}$
    or $n_b\to n.$  Suppose the objective function is decreased
    $\iota_k$ times over the subsequence $\{ \hat{w}_i \}_{i=0}^{k}$.  
    Then
    \[  
f(\hat w_k) = f(\hat w_0) + \sum_{i=1}^{\iota_k} (f(\hat w_i) - f(\hat w_{i-1}))
\le f(\hat w_0) - {\iota_k}\tau.
  \]  
  Assuming $n_b \nrightarrow n$, then as $k \rightarrow \infty$,
  $\iota_k \rightarrow \infty$, and \eqref{eq:nbn} holds.
  If $n_b \rightarrow n$, we reduce to a classic
    line-search approach whose convergence is assured via the 
    trust-region algorithm that makes sufficient progress at each iteration
    (see e.g., \cite{NocY98}).
 \end{proof}

\subsection{Initial matrix $B_0$} 

In this section we borrow terminology defined in Section~\ref{section:SolveLSR1}.
For simplicity in this section we will assume that $B_0=\gamma I$
and analyze the impact of $\gamma$ on various scenarios.  We will
show in this section that the choice of $\gamma$ plays a critical
role in a trust-region approach.  We start by proving a brief lemma
summarizing how directions of negative curvature present in $B_k$ affects the
trust-region solution, a variation of which may also be found in~\cite{Yektamaram17}.
We will denote the smallest and largest eigenvalues of $B_k$ 
by $\lambda_{\min}(B_k)$ and $\lambda_{\max}(B_k)$, respectively.
\begin{lemma}\label{lemma:Bk}
    If $B_k \not \succeq 0,$ as the trust-region radius
    $\delta_k$ increases, the trust-region solution, $p^\ast,$ 
    asymptotically becomes
    parallel to the eigenspace corresponding to $\lambda_{\min}(B_k)$.
    That is,
     \[
    \lim_{\delta_k \to \infty} 
    \dfrac{|u_{\rm min}^T p^\ast|}{\|u_{\rm min}\| \| p^\ast\|}
    = 1.
    \]
    where $u_{\rm min}$ is an eigenvector corresponding to
    $\lambda_{\min}(B_k)$.
\end{lemma}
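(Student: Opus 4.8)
The plan is to work directly from the optimality conditions of the Gay--Mor\'e--Sorensen theorem together with the spectral decomposition $B_k = \Pi\Lambda\Pi^{\transpose}$ from Section~\ref{section:SolveLSR1}. Since $B_k\not\succeq 0$, we have $\lambda_{\min}(B_k)<0$, and for $B_k+\sigma^* I$ to be positive semidefinite the optimal multiplier must satisfy $\sigma^*\ge -\lambda_{\min}(B_k)>0$. In particular $\sigma^*>0$, so the complementarity condition $\sigma^*(\delta_k-\|p^*\|_2)=0$ forces $\|p^*\|_2=\delta_k$; the solution always lies on the trust-region boundary, and hence $\|p^*\|\to\infty$ as $\delta_k\to\infty$.

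First I would pass to the eigenbasis. Writing $\hat g \defined \Pi^{\transpose} g$ and $v^* \defined \Pi^{\transpose} p^*$, equation~\eqref{eq:newopta} gives $v_i^* = -\hat g_i/(\lambda_i+\sigma^*)$ for every index $i$ with $\lambda_i+\sigma^*\ne 0$, where $\lambda_1\le\cdots\le\lambda_n$ are the eigenvalues of $B_k$. Because $\Pi$ is orthogonal, $\|p^*\|=\|v^*\|$ and the cosine appearing in the statement is unchanged if we replace $u_{\rm min}$ and $p^*$ by $\Pi^{\transpose}u_{\rm min}$ and $v^*$, so it suffices to track how the weight of $v^*$ concentrates on the index set $\mathcal J\defined\{i:\lambda_i=\lambda_{\min}\}$.

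The core of the argument splits into the two standard cases, using the monotonicity of $\|v^*(\sigma)\|$ in $\sigma$ already exploited for the secular equation~\eqref{eq:secular}. In the easy case, $\hat g_i\ne 0$ for some $i\in\mathcal J$; since $\|v^*(\sigma)\|$ decreases from $\infty$ as $\sigma$ leaves $-\lambda_{\min}$, the only way to force $\|v^*\|\to\infty$ is $\sigma^*\to(-\lambda_{\min})^{+}$. Writing $\epsilon\defined\sigma^*+\lambda_{\min}\to 0^{+}$, the components with $i\in\mathcal J$ grow like $1/\epsilon$ while those with $i\notin\mathcal J$ stay bounded (their denominators tend to $\lambda_i-\lambda_{\min}>0$). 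Hence $\|v^*\|^2$ is dominated by $\sum_{i\in\mathcal J}\hat g_i^2/\epsilon^2$, the projection of $v^*$ onto $\mathcal J$ captures asymptotically all of its norm, and the limiting cosine is $1$. In the hard case ($\hat g_i=0$ for all $i\in\mathcal J$) one has $\sigma^*=-\lambda_{\min}$ exactly and the boundary solution takes the form $p^*=\hat p^*+\alpha u_{\rm min}$ with $\hat p^*=-(B_k+\sigma^* I)^{\dagger}g$ fixed and $|\alpha|=\sqrt{\delta_k^2-\|\hat p^*\|^2}\to\infty$; since $\hat p^*$ is orthogonal to $u_{\rm min}$ and bounded, $p^*$ again aligns with $u_{\rm min}$.

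I expect the main obstacle to be the bookkeeping when $\lambda_{\min}$ is not simple. When $\mathcal J$ has dimension greater than one, the easy-case limit of $p^*$ is parallel to the projection of $-g$ onto the $\lambda_{\min}$-eigenspace rather than to an arbitrary eigenvector $u_{\rm min}$; the clean identity in the statement is therefore most naturally read with $u_{\rm min}$ taken as that limiting direction, and it is unambiguous when $\lambda_{\min}$ is simple. Making the dominant-term estimate rigorous---bounding the off-$\mathcal J$ contribution uniformly and showing that $\epsilon\to 0$ is actually forced by $\|p^*\|=\delta_k\to\infty$---is routine once the boundary condition and the monotonicity of $\|v^*(\sigma)\|$ are in hand.
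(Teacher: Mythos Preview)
Your proposal is correct and follows essentially the same route as the paper: pass to the eigenbasis via $B_k=\Pi\Lambda\Pi^{\transpose}$, split into the easy case ($g$ has a component along the minimal eigenspace, forcing $\sigma^*\to(-\lambda_{\min})^{+}$ so the $\mathcal J$-coordinates dominate) and the hard case ($p^*=\hat p^*+\alpha u_{\min}$ with $|\alpha|\to\infty$). The paper simply assumes $\lambda_{\min}$ is simple for notational convenience, whereas you carry $\mathcal J$ along and correctly flag that for $|\mathcal J|>1$ the limiting direction is the projection of $-g$ onto the minimal eigenspace; this is a fair reading of the statement, and otherwise the two arguments coincide.
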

\begin{proof}
    Without loss of generality, we assume that
    the $\lambda_{\min}(B_k)$ has multiplicity one
    for ease of presentation. (For how to handle the general case,
    the notation in \cite{BruEM17} can be used.)      
    Let $\lambda_{\min}(B_k) = 
    \lambda_1 < \lambda_2 \le \dots \le \lambda_n$, and let
    $\pi_i$ be the $i^{\text{th}}$ column of
    $\Pi$ in the eigendecomposition of $B_k$ in \eqref{eqn-Beig}.
    Using this notation, $u_{\min} = \pi_1$.
    Then we can define 
    \begin{equation}\label{eq:pnorm}
    	\| p(\sigma)\|^2 
	= 
	\| (B_k+\sigma I)^{-1}g_k\|^2 
	=
	\| \Pi(\Lambda + \sigma I)^{-1}\Pi^T\!g_k \|^2 
	=
	 \sum_{i=1}^n \frac{(\pi_i^Tg_k)^2}{(\lambda_i + \sigma)^2},
    \end{equation}
    provided $\sigma \ne -\lambda_i$ for $1 \le i \le n$.
    To prove the lemma, we consider two cases: 
    (i) $\pi_1^Tg_k \ne 0$ and (ii) $\pi_1^Tg_k = 0$.
    \\     \\
    \textsl{Case (i):} If $\pi_1^Tg_k \ne 0$, then rearranging \eqref{eq:pnorm}
    yields 
    $$
    	\frac{1}{(\lambda_{\min}(B_k)+\sigma)^2} 
	=
	\frac{1}{(u_{\min}^Tg_k)^2} 
	\bigg ( \| p(\sigma) \|^2 - \sum_{i=2}^n \frac{(\pi_i^Tg_k)^2}{(\lambda_i+\sigma)^2} \bigg ),
    $$
    since $\lambda_{\min}(B_k) = \lambda_1$ and $u_{\min} = \pi_1$.
    Moreover, as $\sigma \rightarrow -\lambda_{\min}(B_k)^+$, then
    $\| p(\sigma) \| \rightarrow \infty$ and $\lim_{\sigma
    \rightarrow -\lambda_{\min}(B_k)^+} \phi(\sigma) = -1/\delta$ (see \eqref{eq:secular}).
    Since $\phi(\sigma^*) = 0$ and $\phi(\sigma)$ is continuous on the interval
    $(-\lambda_{\min}(B_k), \infty)$, the optimal Lagrange multiplier $\sigma^*$
    satisfies $\sigma^* > -\lambda_{\min}(B_k)$ (see Fig.\
    \ref{fig:phi}(a)).
    Thus, the solution $p^* = -(B_k +\sigma^*I)^{-1}g_k$ satisfies
    $$
    u_{\min}^Tp^* 
    = -u_{\min}^T(B_k + \sigma^*I)^{-1}g_k
    = -\frac{u_{\min}^Tg_k}{\lambda_{\min} + \sigma^*}.
    $$
    At the optimal Lagrange multiplier $\sigma^*$,
    the trust-region subproblem solution $p(\sigma^*)$
    lies on the boundary, i.e., $\| p(\sigma^*) \| = \delta_k$ and 
    since $\| u_{\min} \| = \| \pi_1\| = 1$, we have
    \begin{eqnarray*}
        		\lim_{\delta_k \to \infty} 
    		\dfrac{|u_{\rm min}^T p^\ast|}{\|u_{\rm min}\| \| p^\ast\|}
    		&=&
           	\lim_{\delta_k \to \infty} 
    		\frac{|u_{\min}^Tg_k|}{(\lambda_{\min} + \sigma^*)\delta_k} 
		  \\
    		&=&
           	\lim_{\delta_k \to \infty} 
		\bigg ( \delta_k^2 
		- 
		\sum_{i=2}^n 
		\frac{(\pi_i^Tg_k)^2}{(\lambda_i+\sigma^*)^2} \bigg )^{\tfrac{1}{2}} 
		\cdot \frac{1}{\delta_k} \\
		&=& 1.
    \end{eqnarray*}
    \textsl{Case (ii):}
    Suppose $\pi_1^Tg_k = 0$.  For any $\sigma^* \ge -\lambda_{\min}$, 
    the vector $\hat{p}^*$ given by
    $$
    	\hat{p}^*
	= -(B_k+\sigma^*)^{\dagger}g_k
	=
	-\sum_{i=2}^n
	\frac{\pi_i^Tg_k}{\lambda_i+\sigma^*} \pi_i,
    $$    
    satisfies the first optimality condition $(B_k+\sigma^*)\hat{p}^* = -g_k$.  
    Now the length of $\hat{p}^*$ is bounded since 
    $\sigma^* \ge -\lambda_1 > -\lambda_i$
    for all $i \ge 2$.  Thus, for sufficiently large $\delta_k$, 
    $\| \hat{p}^* \| < \delta_k$, and the
     trust-region subproblem solution is given by
    $$
    p^\ast = \hat p^\ast + \alpha u_{\rm min},    
    $$  
    where $\alpha$ is chosen such that $\| p^*\| = \delta_k$ (see Sec.\ 
    \ref{section:SolveLSR1}). 
    (Note that this is precisely the hard case (see Fig.\ \ref{fig:phi}(b).)
    Since $u_{\min}^T\hat{p}^* = 0$ (see \cite{BruEM17}),
    $$
    	    \lim_{\delta_k \to \infty} 
    \dfrac{|u_{\rm min}^T p^\ast|}{\|u_{\rm min}\| \| p^\ast\|}
    =
    	    \lim_{\delta_k \to \infty} 
    \dfrac{|\alpha|}{ \| p^\ast\|}
    =
    	    \lim_{\delta_k \to \infty} 
    \dfrac{\sqrt{\delta_k^2 - \|\hat{p}^*\|^2}}{\delta_k}
    =
    1,
    $$
    which completes the proof.
    \begin{figure}[t]
    \label{fig:phi}
    \centering
    \includegraphics[height=4.8cm]{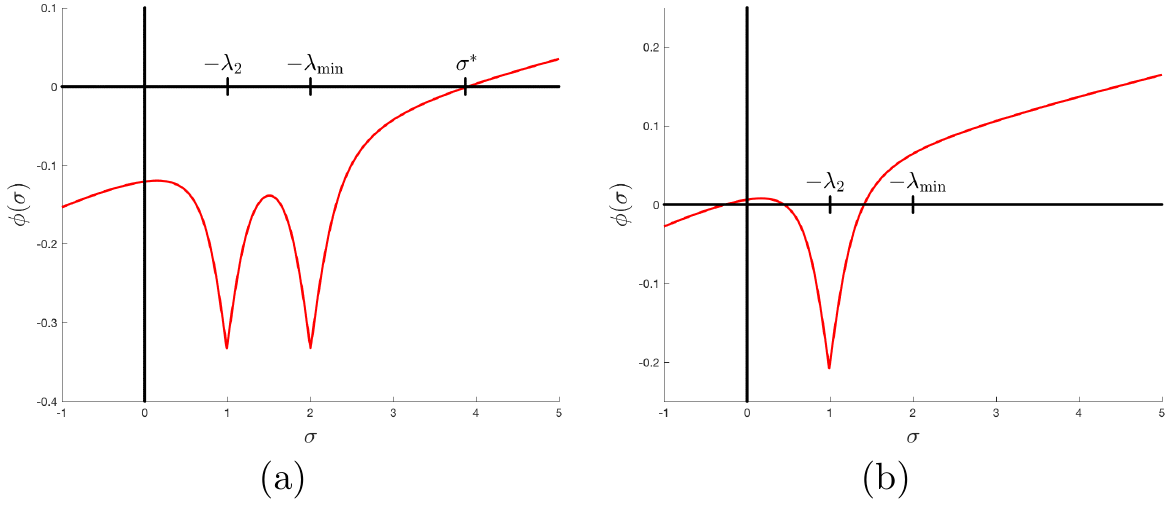} 
    \caption{
    Graph of $\phi(\sigma)$ when $B_k \not \succeq 0$. 
    (a) When $\pi_1^Tg \ne 0$, the optimal Lagrange multiplier satisfies
    $\sigma^* > -\lambda_{\min}(B_k)$. 
    (b) As $\delta_k \rightarrow \infty$, $\phi(\sigma)$ behaves 
    as in the hard case: If $\pi_1^Tg_k = 0$ and 
    $\lim_{\sigma \rightarrow -\lambda_{\min}(B_k)^+} \phi(\sigma) > 0$,
    then $\hat{p}^* = -(B_k+\sigma^*)^{\dagger}g_k$ has length
    $\| \hat{p}^* \| < \delta_k$, where $\sigma^* = -\lambda_{\min}$.
    In this case, $p^* = \hat{p}^* + \alpha u_{\min}$, where
    $u_{\min}$ is an eigenvector of $B_k$ corresponding to  
    $\lambda_{\min}(B_k)$ and $\alpha$ is chosen so that $\| p^* \| = \delta_k$.
    }
    \end{figure}
\end{proof}
Lemma \ref{lemma:Bk} shows the importance for $B_k$ to capture
curvature information correctly since the trust-region
subproblem solution, $p^*$, becomes more parallel to the
eigenvector corresponding to the most negative eigenvalue of $B_k$.
We next prove conditions that highlight how the choice of $\gamma$
affects $B_k$.
\begin{lemma}\label{lemma:geneig}
    Suppose $B_0=\gamma I$ and that $\hat \lambda$ denotes the smallest eigenvalue of the 
    generalized eigenvalue problem 
    \[
    (D_{k} \!+\! L_{k} \!+\! L_{k}^{\transpose}) u = \hat \lambda \! S_{k}^{\transpose}\!S_{k} u.
    \]
    Further assume that $\Psi_k$ and $S_k$ are 
    {full rank}.
    Then if $\hat \lambda > 0,$ we have the following properties:
    \begin{enumerate}
        \item $B_k$ is positive definite if $0 < \gamma < \hat \lambda.$ \label{enum:eig1}
        \item As $\gamma \to \hat \lambda$ from below, 
        $\lambda_{\max}(B_k) \to \infty$
        and ${\rm cond}(B_k) \to \infty.$
        \item As $\gamma \to \hat \lambda$ from above, 
        $\lambda_{\min}(B_k) \to -\infty$.  
    \end{enumerate}
\end{lemma}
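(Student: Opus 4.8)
The plan is to work entirely from the compact representation \eqref{eqn-compactSR1}--\eqref{eq:PsiM}, reducing every statement about the spectrum of $B_k$ to the $k\times k$ data. First I would record the structural facts I will lean on. Because the SR1 secant equations $B_kS_k=Y_k$ hold, the matrix $S_k^{T}Y_k=S_k^{T}B_kS_k$ is symmetric, so that $D_k+L_k+L_k^{T}=S_k^{T}Y_k$ and, with $B_0=\gamma I$, equation \eqref{eq:PsiM} gives $\Psi_k=Y_k-\gamma S_k$ and $M_k^{-1}=S_k^{T}Y_k-\gamma S_k^{T}S_k$. Since $S_k$ is full rank, $S_k^{T}S_k\succ0$, and the defining property of the smallest generalized eigenvalue yields the scalar inequality $c^{T}S_k^{T}Y_k\,c\ge\hat\lambda\,c^{T}S_k^{T}S_k\,c$ for every $c$. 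These two ingredients drive all three parts.

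For part (1) I would feed the generalized-eigenvalue bound straight into $M_k^{-1}$: for any $c\neq0$,
\[
  c^{T}M_k^{-1}c=c^{T}S_k^{T}Y_k\,c-\gamma\,c^{T}S_k^{T}S_k\,c\ge(\hat\lambda-\gamma)\,c^{T}S_k^{T}S_k\,c>0
\]
whenever $0<\gamma<\hat\lambda$, so $M_k^{-1}\succ0$ and hence $M_k\succ0$. Then $\Psi_kM_k\Psi_k^{T}\succeq0$, and $B_k=\gamma I+\Psi_kM_k\Psi_k^{T}$ is the sum of a positive-definite and a positive-semidefinite matrix, giving $B_k\succ0$.

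Parts (2) and (3) I would handle with a single probe vector, which is the technical heart of the argument. Let $u_1$ be a generalized eigenvector for $\hat\lambda$, normalized so that $u_1^{T}S_k^{T}S_k\,u_1=1$; crucially $u_1$ and $\hat\lambda$ are independent of $\gamma$. The linchpin identity is
\[
  M_k^{-1}u_1=(S_k^{T}Y_k-\gamma S_k^{T}S_k)u_1=(\hat\lambda-\gamma)\,S_k^{T}S_k\,u_1,
\]
so that $\zeta:=M_k^{-1}u_1$ satisfies $\zeta^{T}M_k\zeta=u_1^{T}M_k^{-1}u_1=\hat\lambda-\gamma$ while $\|\zeta\|$ is of order $|\hat\lambda-\gamma|$. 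Lifting $\zeta$ through $\Psi_k$ via $v:=\Psi_kP^{-1}\zeta$ with $P:=\Psi_k^{T}\Psi_k$ (well defined and invertible because $\Psi_k$ is full rank) gives $\Psi_k^{T}v=\zeta$ and $\|v\|^{2}=\zeta^{T}P^{-1}\zeta$, whence the Rayleigh quotient collapses to
\[
  \frac{v^{T}B_kv}{\|v\|^{2}}=\gamma+\frac{\zeta^{T}M_k\zeta}{\zeta^{T}P^{-1}\zeta}=\gamma+\frac{1}{(\hat\lambda-\gamma)\,\beta},\qquad \beta:=(S_k^{T}S_k u_1)^{T}P^{-1}(S_k^{T}S_k u_1)>0.
\]
As $\gamma\to\hat\lambda^{-}$ this quotient tends to $+\infty$, forcing $\lambda_{\max}(B_k)\to\infty$; as $\gamma\to\hat\lambda^{+}$ it tends to $-\infty$, forcing $\lambda_{\min}(B_k)\to-\infty$. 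To finish (2) I would observe that every $x$ orthogonal to $\mathrm{range}(\Psi_k)$ obeys $B_kx=\gamma x$ (a nonempty set since $k<n$), so $\gamma$ is an eigenvalue and $\lambda_{\min}(B_k)\le\gamma\le\hat\lambda$ stays bounded while $\lambda_{\max}(B_k)\to\infty$; therefore $\mathrm{cond}(B_k)=\lambda_{\max}/\lambda_{\min}\to\infty$.

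The main obstacle is not the algebra but the limit: I must guarantee that the probe direction is not annihilated as $\gamma\to\hat\lambda$ and that $\beta$ stays bounded and bounded away from $0$. Both reduce to $P=\Psi_k(\gamma)^{T}\Psi_k(\gamma)$ remaining nonsingular in a neighborhood of $\hat\lambda$ (not merely at the nominal $\gamma$), which is exactly what the full-rank hypothesis on $\Psi_k$ supplies---together with the full-rank hypothesis on $S_k$ used for $S_k^{T}S_k\succ0$ and for the normalization of $u_1$. I would also flag the one structural fact to verify up front, namely the symmetry of $S_k^{T}Y_k$, since it is what makes $\hat\lambda$ real, the generalized problem symmetric--definite, and the substitution $D_k+L_k+L_k^{T}=S_k^{T}Y_k$ legitimate.
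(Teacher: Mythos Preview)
Your approach is essentially the paper's: for part~(1) you both deduce $M_k^{-1}\succ0$ from $\gamma<\hat\lambda$ and conclude $B_k\succ0$; for parts~(2) and~(3) you both take the generalized eigenvector, lift it through $\Psi_k$, and exhibit a Rayleigh quotient of the form $\gamma+\text{const}/(\hat\lambda-\gamma)$. The paper routes this through the QR factor of $S_k$ and a min-norm preimage $\Psi_k^{T}\hat z=z$, while you use $P=\Psi_k^{T}\Psi_k$ and $v=\Psi_kP^{-1}\zeta$; these are the same device, and your tracking of the $\gamma$-dependence of $P$ is if anything more careful than the paper's.

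There is one genuine slip. The ``structural fact'' $B_kS_k=Y_k$ does \emph{not} hold for SR1 in general: the update guarantees only $B_{j+1}s_j=y_j$, and the hereditary property $B_ks_j=y_j$ for all $j<k$ is special to quadratic objectives (where $y_j=Hs_j$ forces $s_i^{T}y_j=s_j^{T}y_i$). Consequently $S_k^{T}Y_k$ need not be symmetric, and the substitution $D_k+L_k+L_k^{T}=S_k^{T}Y_k$ is unjustified. The good news is that you never need it: $D_k+L_k+L_k^{T}$ is symmetric by construction (so $\hat\lambda$ is real regardless), and \eqref{eq:PsiM} already gives $M_k^{-1}=D_k+L_k+L_k^{T}-\gamma S_k^{T}S_k$ directly. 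Replace $S_k^{T}Y_k$ by $D_k+L_k+L_k^{T}$ throughout, drop the secant-symmetry preamble, and every line of your argument stands.
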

\begin{proof}
 Recall from \eqref{eqn-compactSR1} and \eqref{eq:PsiM} that
$B_{k} \ = \ \gamma I +  \Psi_{k} M_{k} \Psi_{k}^T,$    
where  $\Psi_k = Y_{k} - \gamma S_{k}$ and
    $ M_{k}^{-1}$ 
    $=$ $D_{k} \!+\! L_{k} \!+\! L_{k}^{\transpose} \!-\! \gamma S_{k}^{\transpose}\!S_{k}.$
    Note 
    if $0 < \gamma < \hat \lambda$, then 
    {$M_{k}^{-1} \succ 0$, and consequently, $M_{k} \succ 0$.}
    Hence
    $B_k \succ 0.$  By assumption we have
    \[
    M_{k}^{-1}u =(\hat \lambda - \gamma) S_{k}^T S_{k} u. 
    \]
    Forming the QR factorization of $S_k=Q_kR_k$
    and letting $z=R_ku$, we have that
    $S_k^T S_k= R_k^T R_k$ and
    $R_k^{-T} M_k^{-1} R_k^{-1} z = (\hat \lambda - \gamma) z.$
    Consequently, 
    \[    
    R_k M_k R_k^T z = \dfrac{1}{(\hat \lambda - \gamma)}z.
    \]
    {Let $\hat{z}$ be} the min-two norm solution to $\Psi_k^T \hat z = z$.
    Then we have that
    \[
    \hat z^T B_{k} \hat z = \gamma \|\hat z\|^2 + \dfrac{1}{\hat\lambda - \gamma}\|z\|.^2 
    \]
    The results then follow since $\hat z$ and $z$ are constant and
    nonzero as $\gamma$ changes.
\end{proof}
Lemma \ref{lemma:geneig} shows that not choosing $\gamma$
judiciously in relation to $\hat{\lambda}$ can have
deleterious effects.  In particular, if $\gamma$ is too close to 
$\hat{\gamma}$ from below, then $B_k$ becomes ill-conditioned.
If $\gamma$ is too close to 
$\hat{\gamma}$ from above, then the smallest eigenvalue
of $B_k$ becomes negatively large arbitrarily.

Next, we analyze the relationship between the choice 
of $\gamma$ and the conditions under which we can expect
$\hat{\lambda} > 0$.  First, we note that
the predicted reduction $\mathcal{Q}_k(p^*)$ $=$ $\nabla f(w_k)^T p^* + \tfrac{1}{2} {p^*}^T B_kp^*$
is always less for a descent direction $p^*$ if $p^*$ is also a
direction of negative curvature.  Moreover, from Lemma \ref{lemma:Bk},
we see that $p^*$ tends to be parallel to the eigenvector of
$B_k$ corresponding to its smallest eigenvalue.  
However, it is desirable to avoid the situation where
$p^*$ is a \emph{false} direction of negative curvature, meaning
${p^*}^T \nabla^2 f(w_k) p^* > 0$ while 
${p^*}^T B_k p^* < 0.$ The following lemma shows that in the limit,
we can select $\gamma$ so that $0 < \gamma < \hat{\lambda}$,
i.e., in the limit, $\hat{\lambda}> 0$ unless the true underlying
Hessian is either indefinite or singular.

\begin{lemma}
    Suppose that $f$ is twice-continuously differentiable,
    that the matrix $S_k$ remains full-rank,
    and that $w_k \to w^*$,  
    where 
    $
    \nabla^2 f(w^*) \succ 0.
    $
    Then $\hat \lambda$ corresponding to $B_{k}$ is 
    positive in the limit.
\end{lemma}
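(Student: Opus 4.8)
The plan is to exploit the fact that, along the optimization trajectory, each stored curvature pair $(s_j,y_j)$ encodes directional second-order information about $f$ near $w^*$. First I would apply the integral form of the mean value theorem to each pair: writing $\bar H_j = \int_0^1 \nabla^2 f(w_j + t s_j)\,dt$, one has $y_j = \bar H_j s_j$, so that the $j$th column of $Y_k$ equals $\bar H_j s_j$. Because the method is limited-memory and $w_k \to w^*$, every endpoint $w_j,w_{j+1}$ entering the current memory converges to $w^*$; continuity of $\nabla^2 f$ then gives $\bar H_j \to H := \nabla^2 f(w^*)$ uniformly over the stored indices. Consequently $Y_k$ agrees with $HS_k$ up to a columnwise error $(\bar H_j - H)s_j$ that is $o(\|s_j\|)$, and hence $S_k^T Y_k \to S_k^T H S_k$, a symmetric matrix, in the limit.

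Next I would translate this into the structure of the generalized eigenproblem. Since $S_k^T Y_k = L_k + D_k + U_k$ converges to the symmetric matrix $S_k^T H S_k$, its strictly-upper part satisfies $U_k \to L_k^T$, and therefore $D_k + L_k + L_k^T \to S_k^T H S_k$. The defining problem $(D_k + L_k + L_k^T)u = \hat\lambda\, S_k^T S_k u$ then reduces, in the limit, to $S_k^T H S_k\, u = \hat\lambda\, S_k^T S_k u$. Because $S_k$ is full rank, the map $u \mapsto z = S_k u$ is injective and $S_k^T S_k \succ 0$, so the Courant--Fischer (Rayleigh-quotient) characterization of the smallest generalized eigenvalue gives $\hat\lambda = \min_{u \ne 0} (u^T S_k^T H S_k u)/(u^T S_k^T S_k u) = \min_{z} (z^T H z)/(z^T z) \ge \lambda_{\min}(H) > 0$, where the minimum over $z$ runs over the column space of $S_k$ and positivity follows from $\nabla^2 f(w^*) \succ 0$. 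This yields the claim.

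The hard part will be making the phrase \emph{in the limit} quantitatively precise, because $w_k \to w^*$ forces $s_j \to 0$ and hence $S_k \to 0$, so both sides of the reduced eigenproblem degenerate simultaneously. To handle this I would work with the perturbation $D_k + L_k + L_k^T = S_k^T H S_k + E_k$, where $E_k$ is assembled from the triangular parts of $S_k^T(Y_k - H S_k)$ and therefore satisfies $\|E_k\| \le C\big(\max_j \|\bar H_j - H\|\big)\|S_k\|^2$. Standard eigenvalue perturbation then gives $\hat\lambda \ge \lambda_{\min}(H) - \|E_k\|/\lambda_{\min}(S_k^T S_k)$, and the error term is controlled by $\big(\max_j\|\bar H_j - H\|\big)\cdot \mathrm{cond}(S_k^T S_k)$. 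Thus $\hat\lambda > 0$ in the limit follows provided the stored directions do not become arbitrarily ill-conditioned faster than $\bar H_j \to H$; I would either invoke the standing full-rank (uniform linear independence) hypothesis on $S_k$ to keep $\mathrm{cond}(S_k^T S_k)$ bounded, or state the result in the idealized sense $Y_k = H S_k$ that the lemma's phrasing suggests. Everything else---the mean value expansion, the symmetry argument, and the Rayleigh-quotient bound---is routine.
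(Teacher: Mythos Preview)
Your approach is essentially the same as the paper's: both expand $y_j$ about $\nabla^2 f(w^*)$ (you via the integral mean value theorem, the paper via Taylor expansion at $w^*$), identify $L_k+D_k+L_k^{\transpose}$ with $S_k^{\transpose}\nabla^2 f(w^*)S_k$ plus a vanishing error, and then bound the generalized Rayleigh quotient below by $\lambda_{\min}(\nabla^2 f(w^*))$. The one substantive difference is that you explicitly confront the degeneracy $S_k\to 0$ and isolate the quantity $\|E_k\|/\lambda_{\min}(S_k^{\transpose}S_k)$, tying positivity of $\hat\lambda$ to a bounded condition number of $S_k$; the paper simply asserts the limit of the quotient and invokes the full-rank hypothesis without quantifying this, so your treatment is the more careful of the two on exactly the point you flagged as ``the hard part.''
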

\begin{proof}
    We observe that each $(s_{j}, y_{j})$ pair satisfy
    $y_{j} = \nabla f(w_{j} + s_{j}) - \nabla f(x_{j}).$
    Using Taylor expansion, we have that
    \begin{eqnarray*}
        \nabla f(w^* - w^* + w_{j} + s_{j}) &=& \nabla f(w^*)
        + \nabla^2 f(w^*) (w_{j} - w^* + s_{j})  + 
       { t_{j+1}} \\
        \nabla f(w^* - w^* + w_{j} )&=& \nabla f(w^*)
        + \nabla^2 f(w^*) (w_{j} - w^*)  + 
       { t_j},
    \end{eqnarray*}
    {where the components of $t_{j+1}$ and $t_j$ are 
    ${\rm O}(\|w_{j} -  w^* + s_{j}\|^2)$ and
    ${\rm O}(\|w_{j} - w^*\|^2)$, respectively.
    Combining these two equations yields}
    \begin{equation}\label{eq:yj}
    y_j =  \nabla^2 f(w^*) s_j +  
    {(t_{j+1}-t_j)}.
    \end{equation}
    We must prove that there exists a $K  >0$ and $\beta >0$ such that for 
    all $k > K$,
    \[
    \hat \lambda = \min_{v} \dfrac{ v^T (L_k + D_k + L_k^T) v}{v^T S_k^T S_k v}
    > \beta .
    \]
    For simplicity let us define $A_k=L_k + D_k + L_k^T$ such that
    \[
    (A_k)_{i+1,j+1} = \left \{ \begin{array}{ll}
    s_i^T y_j &\text{ for }  i \ge j \\
    s_j^T y_i & \text{ otherwise.} 
    \end{array}\right.
    \]  
    Note from \eqref{eq:yj} we have that
    \[
    s_i^T y_j=  s_i^T\nabla^2 f(w^*) s_j + 
    {s_i^T(t_{j+1}-t_j)},
    \]
    and thus 
    \begin{eqnarray*}
        v^T (L_k + D_k + L_k^T) v   &=& 
        v^T S_k^T \nabla^2 f(w^*) S_k v  + 
        \\
        &&
	 2\sum_{j=k-r}^{k-1} \sum_{i=j}^{k-1} s_i^T(t_{j+1}-t_j)v_{i+1}v_{j+1}
	+ \sum_{j=k-r}^{k-1} s_j^T(t_{j+1}-t_j) v_{j+1}^2.
    \end{eqnarray*}
    {Since $s_j = w_{j+1}-w_j$, as $w_j$ converges to $w^*$, 
    both $t_{j+1}$ and $t_j$ tend to 0.}
    Thus
    \[
    \lim_{k \to \infty} \dfrac{ v^T (L_k + D_k + L_k^T) v}{v^T S_k^T S_k v}
    =
    \lim_{k \to \infty}  \dfrac{ v^T S_k^T \nabla^2 f(w^*) S_k v}{v^T S_k^T S_k v}
    \ge \lambda_{\rm min} (\nabla^2 f(w^*)) > 0
    \]
    by assumption.
\end{proof}
In the next lemma, we show that selecting $\gamma > \hat \lambda$
can result in a false curvature prediction. To simplify the proof
we show that the result holds for a quadratic function.  A
more general proof simply uses Taylor expansions and
asymptotic limit properties.
\begin{lemma}
    Suppose we apply Algorithm \ref{alg} to a quadratic objective function
    {$f(w) = c^Tw + \tfrac{1}{2} w^THw$, where $c \in \Re^n$ and
    $H \in \Re^{n \times n}$ are both constant.}
    Then if $\gamma = \tau \hat \lambda$ with $0 < \tau < 1$
    then $B_{k}$ can be indefinite only if the true Hessian
    is indefinite in {the range of $S_k$}, 
    that is,
    \[
    S_k^T \nabla^2 f(w) S_k \not \succeq 0.
    \] 
    Conversely, if $\tau > 1$ then $B_{k}$ may have
    arbitrarily large negative eigenvalues even if the objective
    is convex.  Furthermore, for any trust-region radius
        $\delta_k>0,$
       \[
       	\lim_{\tau \rightarrow 1^{+}} \mathcal{Q}_k(p^*) = - \infty.
       \]
    Thus the model's 
     quality measured by the ratio of 
     actual reduction versus predicted reduction
\[  
	\rho_k = \dfrac{ f(w_k + p^*) - f(w_k)}{\mathcal{Q}_k(p^*). }  
\]
may be arbitrarily poor for any $\delta_k$ sufficiently large.
\end{lemma}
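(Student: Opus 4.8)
The plan is to exploit the one feature that distinguishes a quadratic objective: since $\nabla f(w)=c+Hw$, every curvature pair satisfies $y_j=\nabla f(w_{j+1})-\nabla f(w_j)=Hs_j$ \emph{exactly}, so all of the Taylor remainder terms $t_{j+1}-t_j$ that appeared in the preceding lemma vanish identically. Consequently $Y_k=HS_k$, the matrix $S_k^{\transpose}Y_k=S_k^{\transpose}HS_k$ is symmetric (so $U_k=L_k^{\transpose}$), and the quantity $A_k\defined L_k+D_k+L_k^{\transpose}$ equals $S_k^{\transpose}HS_k$. The generalized eigenvalue problem of Lemma~\ref{lemma:geneig} therefore reads $S_k^{\transpose}HS_k\,u=\hat\lambda\,S_k^{\transpose}S_k\,u$, and with $S_k$ full rank its smallest eigenvalue is the minimum Rayleigh quotient of $H$ over the range of $S_k$,
\[
\hat\lambda=\min_{0\neq v\in\mathrm{range}(S_k)}\frac{v^{\transpose}Hv}{v^{\transpose}v}.
\]
Thus $\hat\lambda>0$ precisely when $S_k^{\transpose}HS_k\succ0$: the scalar $\hat\lambda$ is an exact record of whether the true Hessian is positive definite on the subspace probed by the iterates. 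This identification is the crux, and it is the single step that genuinely uses the quadratic hypothesis.

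For the first part ($0<\tau<1$) I would argue by contraposition. Suppose $S_k^{\transpose}HS_k\succeq0$; then $\hat\lambda\ge0$, and in the nondegenerate case $\hat\lambda>0$. Setting $\gamma=\tau\hat\lambda$ with $0<\tau<1$ places $\gamma\in(0,\hat\lambda)$, so item~\ref{enum:eig1} of Lemma~\ref{lemma:geneig} gives $B_k\succ0$; hence $B_k$ cannot be indefinite, which is exactly the contrapositive of the stated claim. For the second part, take $f$ strictly convex so that $H\succ0$ and $\hat\lambda>0$. Choosing $\gamma=\tau\hat\lambda$ with $\tau>1$ gives $\gamma>\hat\lambda$, and as $\tau\to1^{+}$ we have $\gamma\to\hat\lambda^{+}$; item~3 of Lemma~\ref{lemma:geneig} then yields $\lambda_{\min}(B_k)\to-\infty$. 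So $B_k$ acquires an arbitrarily large negative eigenvalue even though $H$ is positive definite, producing a false direction of negative curvature.

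To bound the predicted reduction I would test the subproblem against the unit eigenvector $u_{\min}$ of $\lambda_{\min}(B_k)$: evaluating $\mathcal{Q}_k$ at $t\,u_{\min}$ with $|t|=\delta_k$ and the sign of $t$ chosen opposite to $g_k^{\transpose}u_{\min}$ gives a nonpositive linear term, so by global optimality of $p^*$,
\[
\mathcal{Q}_k(p^*)\le\mathcal{Q}_k(t\,u_{\min})=-\delta_k|g_k^{\transpose}u_{\min}|+\tfrac12\delta_k^2\lambda_{\min}(B_k)\le\tfrac12\,\delta_k^2\,\lambda_{\min}(B_k)\longrightarrow-\infty
\]
as $\tau\to1^{+}$, for every fixed $\delta_k>0$. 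For the ratio $\rho_k$, the \emph{actual} reduction $f(w_k+p^*)-f(w_k)=g_k^{\transpose}p^*+\tfrac12(p^*)^{\transpose}Hp^*$ is computed with the true Hessian $H$ and, since $\|p^*\|\le\delta_k$, stays bounded (by $\|g_k\|\delta_k+\tfrac12\|H\|\delta_k^2$) as $\tau\to1^{+}$, whereas the denominator tends to $-\infty$; hence $\rho_k\to0$. For $\delta_k$ sufficiently large I would sharpen this using Lemma~\ref{lemma:Bk}: there $p^*$ becomes parallel to $u_{\min}$, so the numerator approaches $\tfrac12\delta_k^2\,u_{\min}^{\transpose}Hu_{\min}>0$ (the function actually \emph{increases} along the false-curvature direction) while the model still predicts the large decrease $\tfrac12\delta_k^2\lambda_{\min}(B_k)<0$, so $\rho_k$ is negative and tends to $0$, i.e.\ arbitrarily poor.

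I expect the genuinely delicate point to be this last claim. The limit $\mathcal{Q}_k(p^*)\to-\infty$ is a clean consequence of Lemma~\ref{lemma:geneig} together with the trust-region test vector, but asserting that $\rho_k$ is poor requires controlling the \emph{numerator}, which is governed by $H$ rather than by $B_k$. Establishing that this numerator stays bounded — and, for large $\delta_k$, is actually positive via the alignment of $p^*$ with the false-curvature eigenvector from Lemma~\ref{lemma:Bk} — is precisely where the true and approximate Hessians must be carefully kept apart.
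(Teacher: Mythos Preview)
Your proposal is correct and follows essentially the same route as the paper: both hinge on the observation that for a quadratic, $y_j=Hs_j$ exactly, so $L_k+D_k+L_k^{\transpose}=S_k^{\transpose}HS_k$ and $\hat\lambda$ equals the minimum Rayleigh quotient of $H$ over $\mathrm{range}(S_k)$; both then read off definiteness of $B_k$ from the sign of $\hat\lambda-\gamma$, and both bound $\mathcal{Q}_k(p^*)$ by evaluating the model at a scaled eigenvector of $\lambda_{\min}(B_k)$. The only cosmetic difference is that the paper writes out the explicit compact form $B_k=\gamma I+(H-\gamma I)S_k(S_k^{\transpose}HS_k-\gamma S_k^{\transpose}S_k)^{-1}S_k^{\transpose}(H-\gamma I)$ and argues directly from it, whereas you invoke items~1 and~3 of Lemma~\ref{lemma:geneig} as a black box; your treatment of $\rho_k$ (bounding the numerator for fixed $\delta_k$, then using Lemma~\ref{lemma:Bk} for large $\delta_k$) is in fact slightly more thorough than the paper's, which simply observes that for convex $f$ the actual change tends to $+\infty$ as $\delta_k\to\infty$.
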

\begin{proof}
	Note that for a quadratic function $f(w)$, 
	$$
	y_k = \nabla f (w_{k+1}) - \nabla f(w_k) = Hw_{k+1} - Hw_k = Hs_k,
	$$
	and therefore, $Y_k = HS_k$.
    This implies that $S_k^TY_k = S_k^THS_k$, and  therefore,
    $L_k + D_k + L_k^T = S_k^THS_k$.
    Then from \eqref{eqn-compactSR1} and \eqref{eq:PsiM}, we have
    \[
    B_{k} = \gamma I + (H-\gamma I)S_k(S_k^T H S_k - \gamma S_k^T S_k)^{-1} 
    S_k^T (H - \gamma I).
    \]
    If $S_k^T H S_k \succ 0$, then 
    $L_k + D_k + L_k^T \succ 0$ and
    $\hat \lambda > 0$.  
    Thus, if $\gamma = \tau \hat{\lambda}$ with $0 < \tau < 1$, then
    $B_{k}$ is positive definite since 
    $(S_k^T H S_k - \gamma S_k^T S_k)$ is positive definite 
    because $0 < \gamma < \hat \lambda$. \\ \\
    Conversely, {if $\tau > 1$, from  the smallest eigenvalue of
    $(S_k^T H S_k - \gamma S_k^T S_k)$ is negative.
    Then as $\tau \to 1^+$, 
    $\lambda_{\min}(S_k^T H S_k - \gamma S_k^T S_k)$ approaches $0^-,$ 
    implying $\lambda_{\min}(B_{k})$ approaches $-\infty.$}
    Let $\pi_{\min}$ denote a vector in the eigenspace corresponding
to $\lambda_{\rm min}(B_k),$ scaled so that $\|\pi_{\min}\|=\delta_k.$ 
Then 
\begin{equation}\label{eq:qpstar}
	\mathcal{Q}_k(p^*) \le 
	\mathcal{Q}_k(\pi_{\min}) 
	= c^T \pi_{\min}+ \dfrac{1}{2}\pi_{\min}^T B_k\pi_{\min}
	\le \|c\|\delta_k + \dfrac{1}{2}{\lambda_{\rm min}(B_k)}\delta_k^2.
\end{equation}
Thus $\lim_{\tau \rightarrow 1^{+}} \mathcal{Q}_k(p^*) = - \infty.$
Moreover, for $\tau$ sufficiently close to $1$ from above,
$B_k$ is indefinite, i.e., $\lambda_{\min}(B_k) < 0$, and therefore
$\lim_{\delta_k \to \infty} \mathcal{Q}_k(p^*
    ) = -\infty$ in \eqref{eq:qpstar}.
    In contrast, 
    if the quadratic objective function is convex, then we must have
    \[
    \lim_{\delta_k \to \infty} f(w_k+p^*) - f(w_k)
    = 
     \lim_{\delta_k \to \infty} c^T p^* + \frac{1}{2}(p^*)^THp^*
     = \infty,
    \]
    meaning that for sufficiently large $\delta_k$,
    the model function $\mathcal{Q}_k$ 
    poorly predicts the actual reduction in $f$.
\end{proof}
When combined with Lemma \ref{lemma:geneig}, 
the following lemma suggests selecting a $\gamma$ 
near but strictly less than $\hat \lambda$ to avoid asymptotically
poor conditioning while improving the negative curvature approximation
properties of $B_k.$   
Note that $\hat \lambda$ is cheaply determined due
to the small column dimension of $S_k.$
\begin{lemma}
    Suppose we apply Algorithm \ref{alg} to a quadratic objective function
    $f(w) = c^Tw + \tfrac{1}{2} w^THw$, where $c \in \Re^n$ and
    $H \in \Re^{n \times n}$ are constant.
    Let $\hat \lambda$ denote the smallest eigenvalue of the 
    generalized eigenvalue problem 
    \[
    (D_{k} \!+\! L_{k} \!+\! L_{k}^{\transpose}) u = \hat \lambda \! S_{k}^{\transpose}\!S_{k} u.
    \]
    Then for all $\gamma < \hat{\lambda}$, 
    the smallest eigenvalue of $B_k$ is bounded above by 
    the smallest eigenvalue of $\nabla^2 f(w) = H$ in the span of $S_k$, i.e.,
    $$
    	\lambda_{\min}(B_k) \le 
	\min_{S_k\!v \ne 0} \frac{v^TS_k^THS_kv}{v^TS_k^TS_kv}.
    $$
\end{lemma}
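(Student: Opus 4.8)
The plan is to reduce everything to a single structural identity, namely that $B_k$ and $H$ induce the same quadratic form on the range of $S_k$, and then read off the bound from the variational (Rayleigh-quotient) characterization of $\lambda_{\min}(B_k)$. First I would recall the quadratic-case specializations already derived in the previous lemma: since $y_j = H s_j$ for a quadratic, we have $Y_k = H S_k$, hence $\Psi_k = Y_k - \gamma S_k = (H - \gamma I) S_k$ and $M_k^{-1} = D_k + L_k + L_k^T - \gamma S_k^T S_k = S_k^T H S_k - \gamma S_k^T S_k$. The hypothesis $\gamma < \hat{\lambda}$, together with Lemma \ref{lemma:geneig}, guarantees $M_k^{-1} \succ 0$, so $M_k$ exists and the compact representation $B_k = \gamma I + \Psi_k M_k \Psi_k^T$ is well defined.

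The central step is the identity $S_k^T B_k S_k = S_k^T H S_k$. I would establish it by a short direct computation: from $\Psi_k^T S_k = S_k^T(H - \gamma I)S_k = M_k^{-1}$ one gets $\Psi_k M_k \Psi_k^T S_k = \Psi_k M_k M_k^{-1} = \Psi_k = (H-\gamma I)S_k$, whence $B_k S_k = \gamma S_k + (H - \gamma I)S_k = H S_k$. Left-multiplying by $S_k^T$ then gives the claimed identity. Conceptually, this is just the \SR1{} hereditary/secant property $B_k s_j = y_j = H s_j$ for every stored pair, so that $B_k$ and $H$ agree exactly on the span of $S_k$.

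With this in hand the bound is immediate. For any $v$ with $S_k v \ne 0$, set $x = S_k v \ne 0$; then the Rayleigh quotient of $B_k$ at $x$ is $(x^T B_k x)/(x^T x) = (v^T S_k^T B_k S_k v)/(v^T S_k^T S_k v) = (v^T S_k^T H S_k v)/(v^T S_k^T S_k v)$. Since $\lambda_{\min}(B_k) = \min_{x \ne 0}(x^T B_k x)/(x^T x)$ is bounded above by the quotient evaluated at any particular nonzero vector, minimizing the right-hand side over all admissible $v$ yields $\lambda_{\min}(B_k) \le \min_{S_k v \ne 0} (v^T S_k^T H S_k v)/(v^T S_k^T S_k v)$, as claimed.

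The only genuinely substantive step is the identity $S_k^T B_k S_k = S_k^T H S_k$; everything afterward is a one-line minimax argument, and I expect no real obstacle there, since the telescoping $\Psi_k M_k \Psi_k^T S_k = \Psi_k$ is forced by the definition of $M_k$. Two minor points I would flag in writing it up: the full-rank assumption on $S_k$ ensures $S_k^T S_k \succ 0$, so the denominator of the generalized Rayleigh quotient never vanishes on the feasible set; and the inequality in fact holds for \emph{any} $\gamma$ for which $B_k$ is well defined, so the hypothesis $\gamma < \hat{\lambda}$ is needed only to keep $M_k$ invertible (and, via Lemma \ref{lemma:geneig}, to render $B_k$ positive definite), not for the upper bound itself.
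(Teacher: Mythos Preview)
Your proof is correct but follows a different route from the paper's. The paper argues via the eigendecomposition of $B_k$: since $\gamma < \hat\lambda$ makes $M_k^{-1}\succ 0$, the eigenvalues $\hat\lambda_i$ of the rank-$k$ update $\Psi_k M_k \Psi_k^T$ are all positive, so the $n-k$ eigenvalues of $B_k$ in the orthogonal complement of the range of $\Psi_k$, each equal to $\gamma$, are the smallest ones; hence $\lambda_{\min}(B_k)=\gamma$. It then identifies the right-hand side with $\hat\lambda$ itself (via $L_k+D_k+L_k^T=S_k^THS_k$) and concludes from $\gamma<\hat\lambda$. Your argument instead exploits the hereditary identity $B_kS_k=HS_k$ and a direct Rayleigh-quotient comparison, never touching the spectral structure of $B_k$. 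This buys you a cleaner and more general statement---as you note, the bound holds whenever $M_k$ is merely invertible, not just when $M_k\succ 0$---while the paper's route yields the sharper information that $\lambda_{\min}(B_k)$ actually \emph{equals} $\gamma$ under the stated hypothesis, with the inequality then strict.
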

\begin{proof}
If $\gamma < \hat{\lambda}$, then the matrix 
$D_k + L_k + L_k^T - \gamma S_k^TS_k$ is positive definite
from the definition of $\hat{\lambda}$.
Therefore, from \eqref{eqn-compactSR1} and  \eqref{eqn-Beig} ,
the eigenvalues in the matrix  $\hat{\Lambda}$
for the low-rank update to $B_0$  are positive.  Consequently, 
$
	\lambda_{\min}(B_k) = \gamma.
$
From the proof of Lemma 2.6, 	$L_k + D_k + L_k^T = S_k^THS_k$.
Therefore,
$$
	\hat{\lambda} = \min_{S_k\!v \ne 0}
	\frac{v^TS_k^THS_kv}{v^TS_k^TS_kv}.
$$
The result follows from the assumption that $\gamma < \hat{\lambda}$.
 \end{proof}

It is further worthwhile
to note that these observations were motivated by investigating
why the algorithm failed on some test cases but not others. Once
these safe-guards were put in place, the robustness of the algorithm
went from inferior to L-BFGS to superior. That is, if the 
reader has attempted to use L-SR1 in the past and found sometimes
it works great, and other times it fails, we suggest that it is
likely the case that failures were induced by inadvertently
permitting the case $0 < \hat \lambda < \gamma$
to occur.

\section{Numerical results}
\label{sec:method}
In this section, we present two sets of numerical results comparing the
performance of several methods, including the proposed Limited-Memory
\SR1{} Trust-Region (\LSRTR{}) and Limited-Memory Stochastic \SR1{}
Trust-Region (\LSSRTR{}) methods, on two databases.
In our experiments, we use a fully-connected network model (see Figure\ \ref{fig:network}).
The training inputs $x_i$ are $28 \times 28$ images, 
which are represented as vectors in $\Re^{28^2} = \Re^{784}$.  
At each layer, an affine transformation $W_{\ell}a_{\ell-1}+b_{\ell}$ is applied to the input vector
$a_{\ell-1}$, where $W_{\ell}$ is a matrix of weights and $b_{\ell}$ is a bias vector.
Before passing onto the next layer, an activation function $\theta$, defined to be the logistic function
$$
	\theta \left ((W_{\ell} a_{\ell-1} + b_{\ell}) \right )_j  = \frac{1}{1 + e^{-(W_{\ell} a_{\ell-1}+b_{\ell})_j}},
$$
is applied.  At the final layer, $L$, we apply a softmax function, given by
$$
	(\mathcal{S}(\theta(W_{L}a_{L-1} + b_{L})))_j
	=
	\frac{e^{(\theta(W_{L}a_{L-1} + b_{L}))_j }}
	{\sum_{k=1}^K e^{(\theta(W_{L}a_{L-1} + b_{L}))_k}},
$$
so that the output vector $p(w, x_i)$ corresponds to probabilities with
$
	\sum_{k=1}^K ( p(w, x_i) )_k = 1.
$
Here, $w = (W_1, b_1, W_2, b_2, \dots, W_L, b_L)$.
The softmax function is 
paired with cross-entropy for the final 
output layer to form the resulting loss function element 
$f_i$ in \eqref{eqn-min}:
$$
	f_i(w) = -\sum_{k = 1}^K  (y_i)_k \log (p(w,x_i))_k,
$$
where $K$ 
is the dimension of the output layer.
For further details, see \cite[Chap.\ 11]{Friedman2001}.
Finally, for \LSSRTR{}, we used 33\% for the overlap and used a minibatch size of 100, increasing the batch size by a factor of 1.5 when progress ceased relative to the true loss.

\begin{figure}[tb]
\centering
\includegraphics[width=12cm]{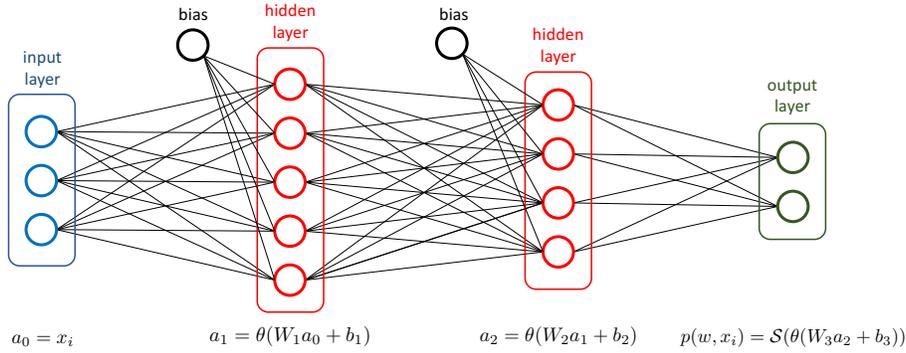}
\caption{
Illustration of a fully-connected network model.
Here, the input vector is $x_i \in \Re^d$ and the final output vector is
$p(w, x_i) \in \Re^K$, with $d = 3$ and $K = 2$.  
The weight matrices are $W_1 \in \Re^{5 \times 3}$,
$W_2 \in \Re^{4 \times 5}$, and $W_3 \in \Re^{2 \times 4}$.
The bias vectors are $b_1 \in \Re^5$, $b_2 \in \Re^4$, and $b_3 \in \Re^2$. 
The activation function $\theta( \ \cdot \ )$
is applied before passing the output to the next layer, except at the input layer.
The output vector $a_{\ell}$ of hidden layer $\ell$ is then used as the input in the next layer, $\ell+1$.
In the final (output) layer, the softmax function $\mathcal{S}( \ \cdot \ )$ is applied so that
the output vector $p(w,x_i)$ corresponds to probabilities with
$\sum_{k=1}^K (p(w,x_i))_k = 1$.
Here, $w = (W_1,b_1,W_2,b_2,W_3,b_3)$.  When vectorized, $w \in \Re^{54}.$
}
\label{fig:network}
\end{figure}

Two errors are used to train a network: training error and test
error.  The training error is used to define the optimization problem
(\ref{eqn-min}).  Most approaches that use \emph{training data} tend to
find models that \emph{overfit} the data, i.e., the models find
relationships specific to the training data that are not true in general.
In other words, overfitting prevents machine learning algorithms from
correctly generalizing.  To help prevent overfitting, an
independent data set, called the \emph{test set} is used to validate the
accuracy of the model to gage its usefulness in making future predictions.  Training
errors and test errors are computed using the loss function $f(w)$ in
(\ref{eqn-min}).
For machine learning, it is important to make sure the trained model yields
as small test
error as possible.  
The solution of (\ref{eqn-min}) is taken to be the $w$ that
minimizes the test error even though we are directly minimizing the
training error, which is our best measure for estimating 
the expected value of the loss function for unknown data.
Generally speaking, with neural network 
models it is possible to drive the training error to zero for sufficiently large networks; however, the resulting models tend to be overfitted and have
less predictive value.

\bigskip

\noindent \textbf{Experiment I.} 
For the first set of experiments, we compared the training and test 
errors of three methods: (i) a Hessian-free utilizing the Generalized Gauss-Newton 
method described in~\cite{maGN},
(ii) an \LBFGS{} method based on~\cite{LBFGS}, and (iii) the
proposed \LSRTR{} method
(see Figure \ref{fig:MNIST1}).
We do not include existing \SGD{} methods 
because they are already finely tuned for the \MNIST{} data set 
and the computational time involved in the hyper-parameter tuning 
cannot easily be accounted for in a fair comparison. For both \L-BFGS{}
and \LSRTR{} methods, a Wolfe line search was used.
We tested the three methods on 
two data sets with full training and testing observations.
The first set (Experiment IA) uses the full Mixed National
Institute of Standards and Technology (\MNIST) database, which is a
large collection of handwritten digits that is commonly used for
training various image processing
systems~\cite{Kussulernst2004,Lecun1998}.  It contains 60,000 training
images and 10,000 testing images.  The goal is to train the neural
network in order to classify the handwritten digits 0 through 9
with minimal error.
The second set (Experiment IB)  uses 
the Extended \MNIST{} (\EMNIST) database, which is an extension
of the \MNIST{} database to handwritten letters \cite{EMNIST}. 
We compared the
performance of the three methods on different network configurations
with varying numbers of layers and neurons, 
which are denoted by the sequence of numbers above each graph in Figures  \ref{fig:MNIST1}
and \ref{fig:EMNIST1}. For example, 
the sequence ``784-350-250-150-10" in Figure\ \ref{fig:MNIST1}(a) refers to the following:
the number of inputs is $784 = 28^2$, which corresponds to the pixel value of the input images, 
which are $28 \times 28$ in size;
the number of layers is $3$ with 350 neurons in the first layer,
250 in the second, and 150 in the third; and the number
of outputs is 10 for the 10 different classes that correspond to the digits from 0 to 9.
All tests were performed in {\small MATLAB}
(R2016b) on a 64-bit 2.67Ghz Intel\circledR \;Xeon \circledR \;CPU E7-8837  
machine with 4 processors and 256 GB RAM.
These experiments  were designed to test the hypothesis that one of
the primary reasons why Hessian-free methods outperform BFGS variants in
deep learning optimization problems is that they better approximate and
exploit negative curvature.

The results on the four different network configurations 
are given in
Figure \ref{fig:MNIST1}.
In Figure \ref{fig:MNIST1}, loss versus ``iterations'' and ``time'' are plotted.
Generally speaking, the Hessian-free method
outperforms both \LBFGS{} and 
{\small L-SR1-TR} in terms of achieving the
smallest test loss (and training loss) in the fewest iterations, with 
{\small L-SR1-TR}
outperforming \LBFGS{}.  
However, the cost per iteration for Hessian-free is significantly
higher since Hessian-free uses matrix multiplies whereas the quasi-Newton
methods use a (much cheaper) single
gradient evaluation.  Thus, in terms of wall-time, {\small L-SR1-TR}
 is the fastest method, obtaining 
the best solution in the least amount of time given a one-hour window to
solve the given network.  

\bigskip

\noindent 
\textbf{Experiment II.} 
The second set of experiments compares the two proposed \LSRTR{} and
the stochastic mini-batch version of \LSRTR{} (\LSSRTR) methods
on the same network configurations as in the first set of
experiments (see Figures \ref{fig:MNIST2} and  \ref{fig:EMNIST2}).  
While the \LSRTR{} method
achieves lower test and training losses than \LSSRTR{} per iteration
(see Figures \ref{fig:MNIST2}(a,c,e,g) and  \ref{fig:EMNIST2}(a,c,e)),
\LSSRTR{} is the fastest method in terms of wall-time 
(see Figures \ref{fig:MNIST2}(b,d,f,h) and \ref{fig:EMNIST2}(b,d,f)) 
because the computational cost per iteration for \LSSRTR{} is 
significantly cheaper.

We note that the results in Figures \ref{fig:MNIST1}-\ref{fig:EMNIST2}
are representative of other experiments.

\section{Conclusions}

\label{sec:conclusions}

In this paper, we presented an alternative approach for solving machine learning problems
that is based on the \LSR1{} update that allows for indefinite Hessian approximations.
This approach is particularly suitable for non-convex problems where exploiting
directions of negative curvature is crucial.  Numerical experiments suggest
that the proposed approaches (the limited-memory \SR1 trust-region and the limited-memory
stochastic \SR1 trust-region methods) can outperform the more commonly used quasi-Newton approach
(\LBFGS{}) both in terms of computational efficiency and test and training loss.

\section*{Acknowledgments}
We would like to thank Wenwen Zhou and Alireza Yektamaram for stimulating conversations
concerning stochastic quasi-Newton methods with SR1.  Further Alireza was instrumental in setting up the stochastic framework used to generate numerical results.   
J.\ Erway's research work was funded by NSF Grants CMMI-1334042 and IIS-1741264.
R.\ Marcia's research work was funded by NSF Grants CMMI-1333326 and IIS-1741490.


\bibliographystyle{abbrv}
\bibliography{myrefs,jerway,jgriffin,rmarcia}

\begin{figure}[htbp!]
\centering
\includegraphics[width=14cm]{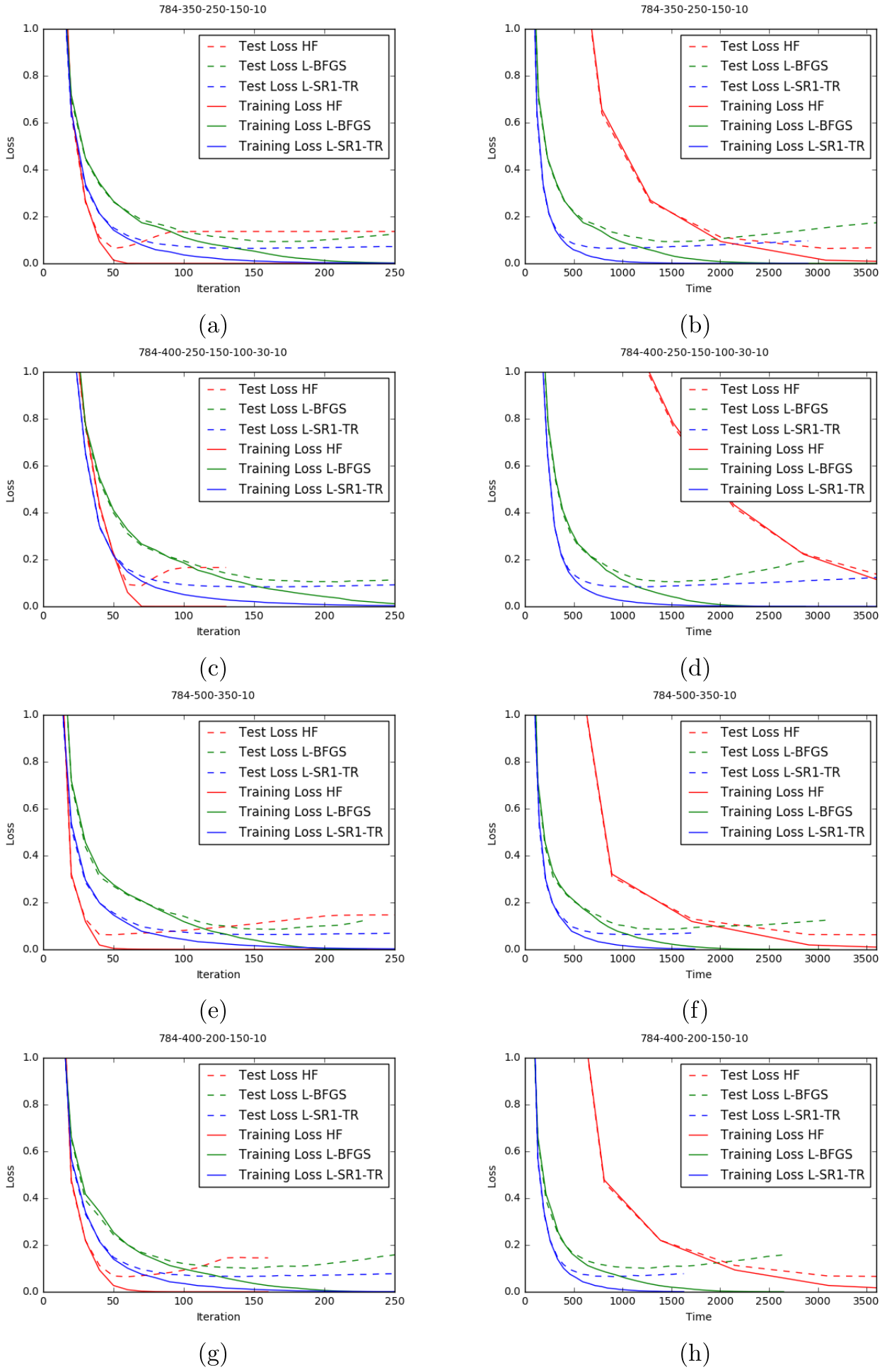}
\caption{\textbf{Experiment IA.} Plots of the loss function versus iterations (left) and  
time (right) for 
the Hessian-Free (HF), 
Limited-Memory BFGS (L-BFGS), and the proposed  
Limited-Memory SR1 Trust-Region (L-SR1-TR{})
methods
    on the MNIST data set of handwritten digits using four different sets of
    hidden layers 
    \{\{350,250,150\},
    \{400,250,150,150,100,30\},
    \{500,350\},
    \{400,200,150\}\}
    with
    input layer of size 784 and output layer of size 10.}\label{fig:MNIST1}
\end{figure}

\begin{figure}[htbp!]
\centering
\includegraphics[width=14cm]{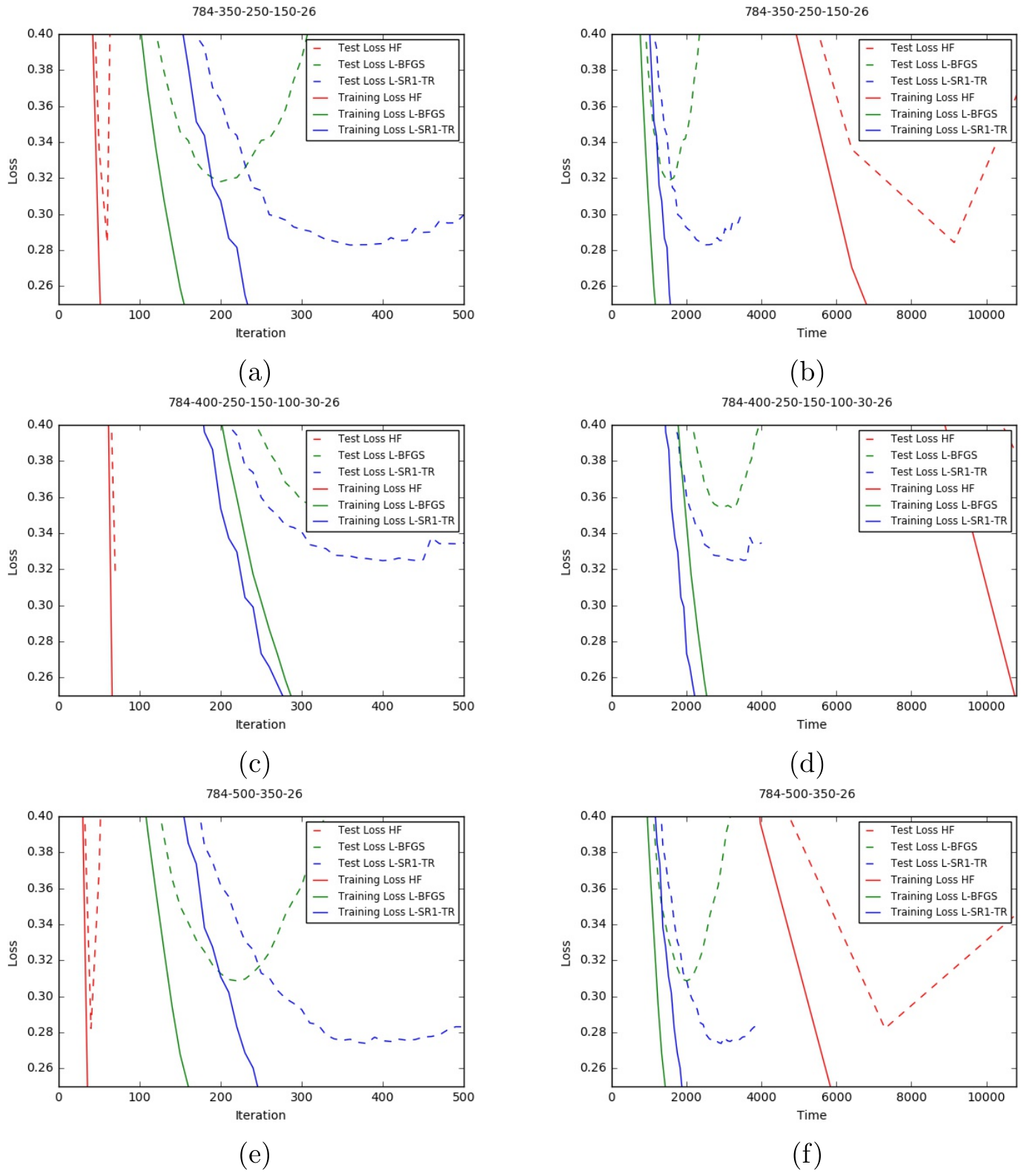}
\caption{\textbf{Experiment IB.} Plots of the loss function versus iterations (left) and  
time (right) for 
the Hessian-Free (HF), 
Limited-Memory BFGS{} (L-BFGS), and the proposed  
Limited-Memory SR1 Trust-Region (L-SR1-TR{})
methods
    on the EMNIST data set of handwritten letters using three different sets of
    hidden layers 
    \{\{350,250,150\},
    \{400,250,150,100,30\},
    \{500,350\}\}
    with
    input layer of size 784 and output layer of size 26.}\label{fig:EMNIST1}
\end{figure}


\begin{figure}[htbp!]
\centering
\includegraphics[width=14cm]{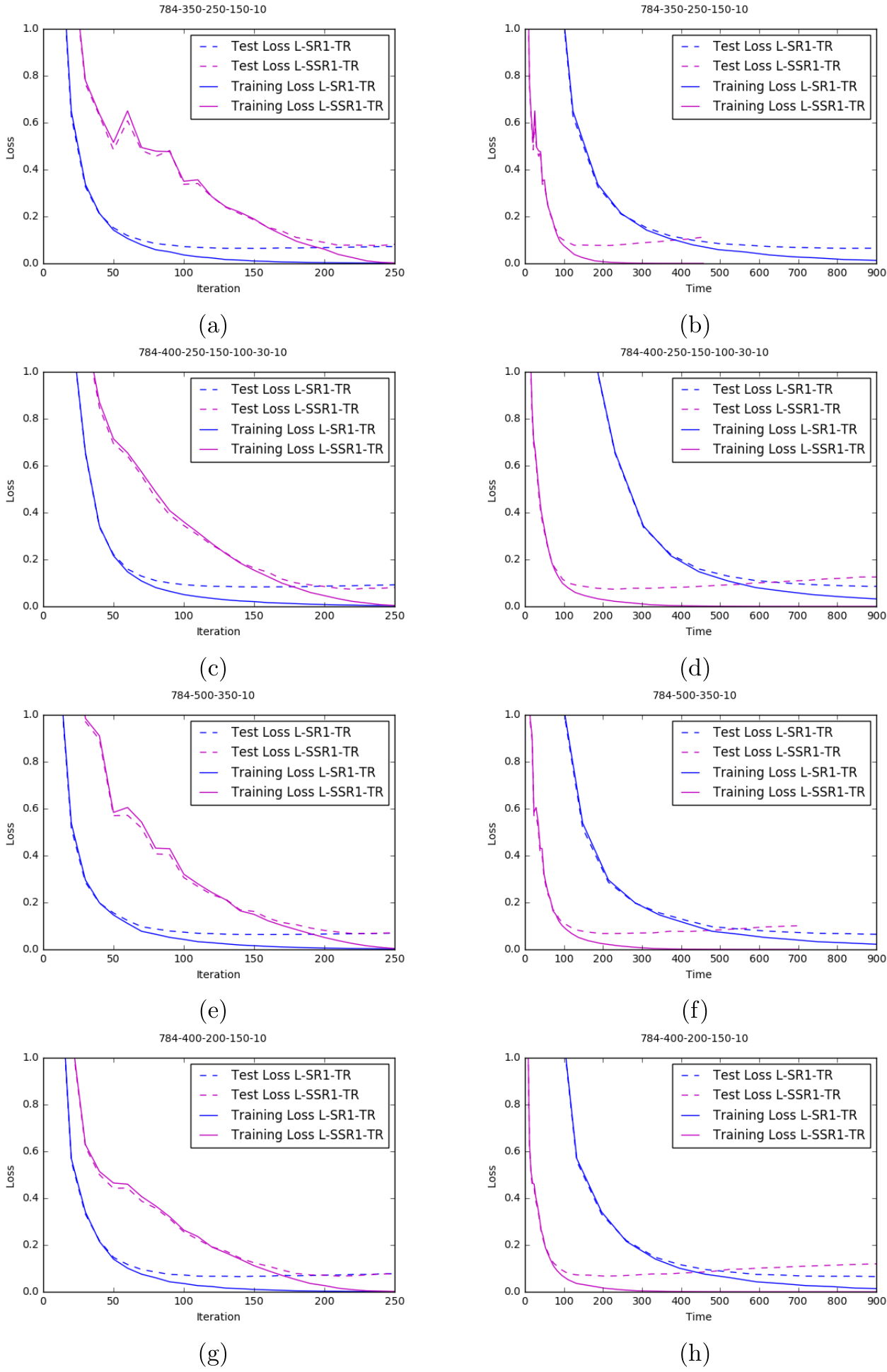}
\caption{\textbf{Experiment IIA.} Plots of the loss function versus iterations (left) and  
time (right) for the proposed   
Limited-Memory SR1 Trust-Region (L-SR1-TR{})
and Limited-Memory Stochastic SR1 Trust-Region (L-SSR1-TR{}) methods
    on the MNIST data set of handwritten digits using four different sets of 
    hidden layers 
    \{\{350,250,150\},
    \{400,250,150,150,100,30\},
    \{500,350\},
    \{400,200,150\}\}
    with
    input layer of size 784 and output layer of size 10.}\label{fig:MNIST2}
\end{figure}

\begin{figure}[htbp!]
\centering
\includegraphics[width=14cm]{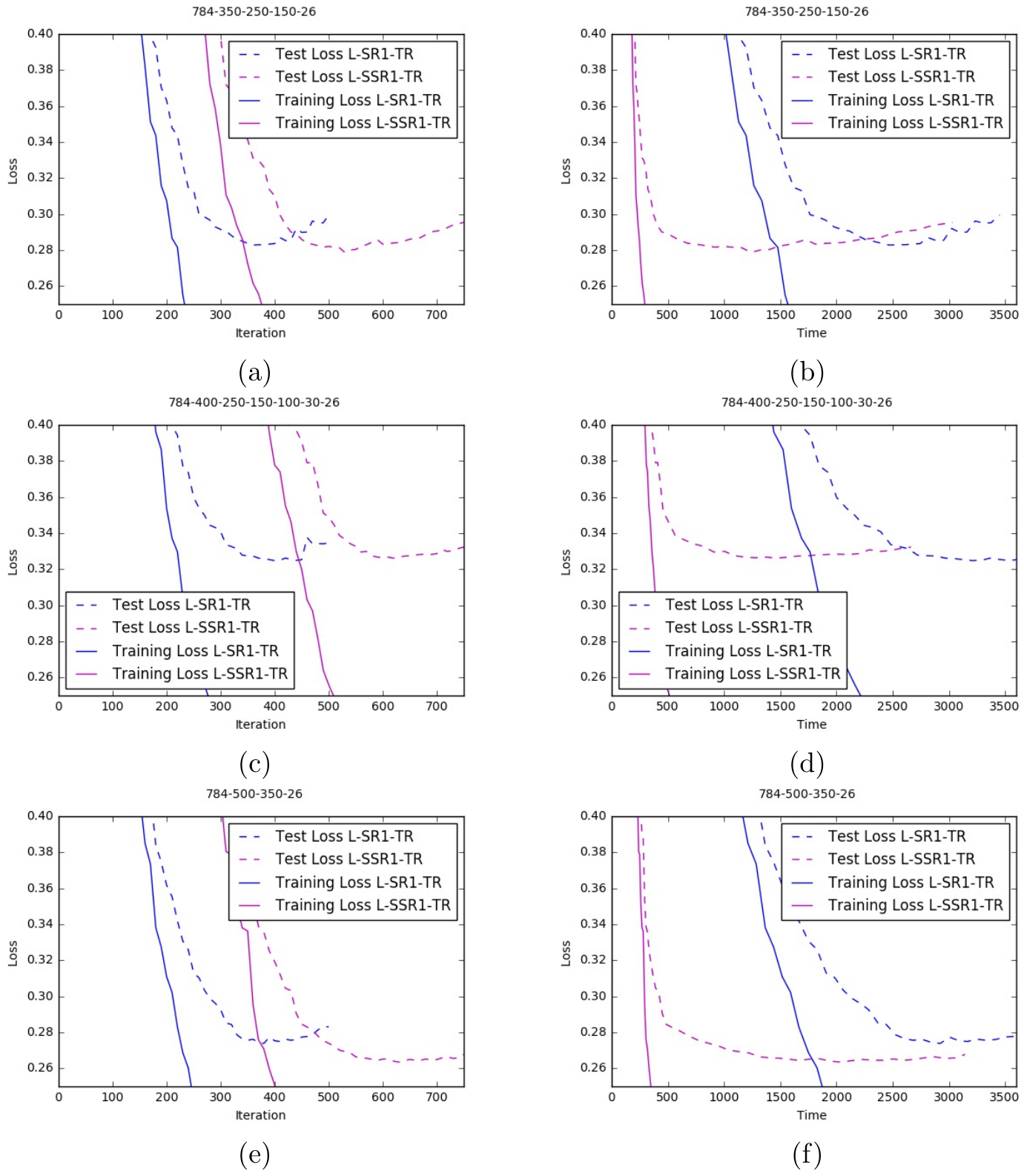}
\caption{\textbf{Experiment IIB.} Plots of the loss function versus iterations (left) and  
time (right) for the proposed   
Limited-Memory SR1 Trust-Region (L-SR1-TR{})
and Limited-Memory Stochastic SR1 Trust-Region (L-SSR1-TR{}) methods
    on the EMNIST data set of handwritten letters using four different sets of 
    hidden layers 
    \{\{350,250,150\},
    \{400,250,150,150,100,30\},
    \{500,350\}\}
    with
    input layer of size 784 and output layer of size 26.}\label{fig:EMNIST2}
\end{figure}

\end{document}